\documentclass[12pt]{article}
\usepackage{amsmath}
\usepackage{amsthm}
\usepackage{amssymb,amsfonts}
\usepackage[T1]{fontenc}
\usepackage{parskip}
\usepackage{hyperref}
\usepackage{latexsym}
\usepackage{todonotes}
\usepackage{upgreek}
\usepackage{rotating}
\usepackage{nicefrac}
\usepackage{epsfig}
\usepackage{stmaryrd}
\usepackage{setspace}
\usepackage{enumerate}
\usepackage[all]{xypic}
\usepackage{bbm,ifpdf,tikz}
\ifpdf
\usepackage{pdfsync}
\fi
\usepackage[alphabetic,nobysame]{amsrefs}

\newtheorem{theorem}{Theorem}[section]

\newtheorem{lemma}[theorem]{Lemma}
\newtheorem{proposition}[theorem]{Proposition}
\newtheorem{corollary}[theorem]{Corollary}

{
\theoremstyle{definition}
\newtheorem{definition}[theorem]{Definition}
\newtheorem{example}[theorem]{Example}

\newtheorem{remark}[theorem]{Remark}
}

\newcommand{\excise}[1]{}

\renewcommand{\dim}{\operatorname{dim}}

\renewcommand{\and}{\qquad\text{and}\qquad}

\newcommand{\HD}{\operatorname{HD}}
\newcommand{\rank}{\operatorname{rank}}

\newcommand{\Z}{\mathbb{Z}}
\newcommand{\Q}{\mathbb{Q}}

\newcommand{\R}{\mathbb{R}}

\newcommand{\cG}{\mathcal{G}}

\newcommand{\cGg}{\cG_{g,S}}

\newcommand{\cGgop}{\cGg^{\op}}

\newcommand{\cGop}{\mathcal{G}^{\op}}

\newcommand{\op}{{\operatorname{op}}}

\newcommand{\Spl}{\operatorname{Spl}}

\newcommand{\bp}{{\mathbf{p}}}

\begin{document}
\spacing{1.2}
\noindent{\LARGE\bf Universality theorems for generalized splines}\\

\noindent{\bf Jacob P. Matherne}\footnote{Supported by NSF Grant DMS-2452179 and Simons Foundation Travel Support for Mathematicians Award MPS-TSM-00007970.}\\
Department of Mathematics, North Carolina State University, Raleigh, NC
\vspace{.1in}

\noindent{\bf Eric Ramos}\footnote{Supported by NSF grants DMS-2137628 and DMS-2452031.}\\
Department of Mathematical Sciences, Stevens Institute of Technology, Hoboken, NJ
\vspace{.1in}

\noindent{\bf Julianna Tymoczko}\footnote{Supported by NSF grants DMS-1800773 and DMS-2054513}.\\
Department of Mathematics and Statistics, Smith College, Northampton, MA\\

{\small
\begin{quote}
\noindent {\em Abstract.}
We study generalized splines from the perspective of the representation theory of the category of graphs with contractions. Our main theorem proves a kind of finite generation, which in turn implies the existence of a ``universal generating set'' for the module of splines over any graph with fixed combinatorial genus. This theorem holds over any Noetherian commutative ring with a chosen finite list of ideals for edge-labels.  We then give several applications of this theorem, including showing that a particular generating function associated to splines on trees is algebraic when the base ring satisfies certain finiteness conditions. We illustrate our technical theorems explicitly by giving a classification of splines on graphs with combinatorial genus one and two.  
\end{quote} }

\section{Introduction}
The study of splines has been a mainstay in computational mathematics and algebraic geometry since the 1940's.  For general treatments, we point to \cite{S07,LS07}, for an emphasis on splines in computer graphics and design to \cite{BBB87}, and from the point of view of data interpolation to \cite{dB01}. In 2016, Gilbert, Tymoczko, and Viel developed a theory of splines (called generalized splines) on arbitrary graphs \cite{GTV}, generalizing work of Billera \cite{B88} and Guillemin--Zara \cite{GZ00,GZ01,GZ01b}. Parallel to this, there has  been an explosion of interest in recent years in the representation theory of categories, especially those categories which maintain a certain combinatorial flavor \cite{sam}. More specifically, there has been a recent push to categorify classical graph structure theorems, and apply these categorifications to a wealth of applications in algebra, topology, and combinatorics \cite{Barter,PR-trees,PR-genus,MR,KR}.

This paper unites these two threads in the literature by applying the representation theory of categories of graphs with edge-contractions \cite{PR-genus} to the study of generalized splines \cite{GTV}, providing a new framework with which to unify and expand upon results in the literature.  Our main results (described in Sections~\ref{subsec:intronoeth} and~\ref{sec:gensplinesintro} below) can be thought of as categorifying the theory of splines, explaining why the same  themes and constructions appear in a variety of different contexts in spline analysis  \cite{BT,GTV,PSTW,RS,AMT}, as well as:
\begin{itemize}
\item providing an intuitive sense of how bases of spaces of splines depend on different parameters of the graph (see, for example, Remark \ref{flow-up});
  \item developing asymptotic information about how splines on certain families of graphs grow (see Theorem \ref{algebraic}); 
  \item giving concrete generators for splines in special cases (see the examples of Section \ref{sec:smallComputations} and Theorem \ref{theorem: splines on graphs genus one and two}); and 
  \item establishing that (combinatorial) genus of graphs is a key parameter for spline analysis (see Sections \ref{section: algebraicity of trees} and \ref{examples}).
 \end{itemize}
Furthermore, our main theorems rely only on the coefficient ring being Noetherian and so apply broadly across all typical applications of splines.

The main results of this paper are not explicitly constructive.  However, they indicate where one could expect to find patterns and possibly even formulas.  For instance, these results suggest that to analyze the dimension of spaces of splines, one should look at families of graphs with constant genus. 

We now describe our main results with more technical precision.

\subsection{The category $\cGg$ and Noetherianity}\label{subsec:intronoeth}

For the remainder of this paper, we fix a commutative Noetherian ring $R$ with unit along with a finite set $S$ of ideals in $R$. An \textbf{edge-labeled graph} is a pair $(G,\alpha)$ where $G$ is a connected graph with finite vertex set $V_G$ and finite edge set $E_G$, and  $\alpha$ is a map $\alpha\colon E_G \rightarrow S$.  A \textbf{contraction} between edge-labeled graphs is a map  $\varphi\colon(G,\alpha) \rightarrow (G',\alpha')$  that replaces an edge $uv$ with a vertex $v'$ while preserving the other edges along with their labels (see Definition~\ref{def:contraction}). We write $\cGg$ for the category whose objects are edge-labeled graphs with \textbf{(combinatorial) genus} $g :=|E_G|-|V_G|+1$, and whose morphisms are contractions. Our primary interest is to study \textbf{representations} of the opposite category $\cGgop$.

Let $R$ be a commutative Noetherian ring with unit. A \textbf{$\cGgop$-module over $R$}---or an \textbf{$R$-representation} of $\cGgop$---is a covariant functor from $\cGgop$ to the category of finitely generated $R$-modules.  Intuitively, a \textbf{$\cGgop$-module} $M$ consists of a  collection of finitely generated $R$-modules $\{M(G,\alpha)\}_{(G,\alpha)}$, together with an assignment of an $R$-module homomorphism $\varphi^*\colon M(G',\alpha') \rightarrow M(G,\alpha)$ to each contraction of edge-labeled graphs $\varphi\colon(G,\alpha) \rightarrow (G',\alpha')$. 

A $\cGgop$-module $M$ is \textbf{finitely generated} if there exists a finite collection of edge-labeled graphs $\{(G_i,\alpha_i)\}_i$ such that for any edge-labeled graph $(G,\alpha)$, the $R$-module $M(G,\alpha)$ is spanned by the images of the $R$-modules $M(G_i,\alpha_i)$ under the maps induced by contractions, in the following sense.  Each $(G_i,\alpha_i)$ is obtained from $(G,\alpha)$ by contracting some subset of edges, forgetting the labels on all contracted edges, and preserving edge-labels on all remaining edges.  The $\cGgop$-module structure produces a set of $R$-module homomorphisms $\varphi^{\ast}_i \colon M(G_i,\alpha_i) \rightarrow M(G,\alpha)$.  Finite-generation means that $M(G,\alpha)$ is contained in the span of the $R$-modules $\varphi^{\ast}_i(M(G_i,\alpha_i))$. In this case we refer to $\{(G_i,\alpha_i)\}_i$ as the set of \textbf{generators} of $M$.  

Our primary technical theorem is the following.

\begin{theorem}\label{noetherian}
If $M$ is a finitely generated $\cGgop$-module, then all submodules of $M$ are also finitely generated.
\end{theorem}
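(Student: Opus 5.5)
Since Theorem~\ref{noetherian} extends the Noetherianity result for representations of the opposite category of genus-$g$ graphs with contractions in \cite{PR-genus} to the edge-labeled setting, I will use the Gröbner-methods framework of Sam--Snowden \cite{sam}. In that framework, it suffices to show that $\cGgop$ (or a suitable subcategory with the same module theory) is \emph{quasi-Gröbner}. Concretely, I will find a category $\cG^{\mathrm{o}}$ with an essentially surjective functor $\Phi\colon \cG^{\mathrm{o}} \to \cGg$ satisfying two conditions. First, $\Phi$ must have property (F): for each object $x$ of $\cGg$ there are finitely many objects $y_1,\dots,y_n$ of $\cG^{\mathrm{o}}$ with maps $f_i\colon \Phi(y_i)\to x$ such that every map $\Phi(y)\to x$ factors through some $f_i$ via a morphism of $\cG^{\mathrm{o}}$. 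Second, $(\cG^{\mathrm{o}})^{\op}$ must be Gröbner, meaning it is directed (no nontrivial endomorphisms) and each principal projective is Noetherian under an admissible well-order. Since $R$ is Noetherian, the Sam--Snowden theorem then says finitely generated $\cGgop$-modules over $R$ are Noetherian, which is exactly the statement that submodules are finitely generated. The principal projectives of $\cGgop$ are $P_{(G',\alpha')}(G,\alpha)=R[\mathrm{Hom}((G,\alpha),(G',\alpha'))]$, i.e., free on contractions onto a fixed target, so everything reduces to combinatorics of contraction maps.

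The key steps, in order, are as follows.

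First, I will rigidify. As in \cite{PR-genus}, I will take $\cG^{\mathrm{o}}$ to consist of graphs equipped with extra structure that kills automorphisms: a root vertex, an ordering of edges compatible with a spanning tree, and planar-like data. Morphisms will be contractions preserving this structure (for instance, order-preserving on surviving edges, with minimality conditions). Edge labels from the finite set $S$ are carried along untouched. Property (F) follows because the target $(G',\alpha')$ has finitely many rigidifications, and any contraction onto it can be made structure-preserving after choosing compatible structure upstream.

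Second, I will establish the Noetherian/well-quasi-order condition. For fixed target $x$, I order the set $\bigcup_y \mathrm{Hom}(y,x)$ by $f\le f'$ iff $f'=f\circ\psi$ for some rigidified contraction $\psi$. I must show this is a well-quasi-order. Here I will reduce to the unlabeled case from \cite{PR-genus}: their wqo argument encodes genus-$g$ graphs contracting to a fixed graph as finitely many core shapes decorated by trees and paths. In that encoding, wqo follows from Higman's lemma or Kruskal's tree theorem applied to words and trees over a finite alphabet. With labels, the alphabet is replaced by its product with $S$ (and finitely many marking data), which is still finite. Kruskal and Higman hold for labeled trees and words over a wqo (in particular finite, equality-ordered) label set, so the labeled version goes through provided contractions match labels on surviving edges. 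A subtlety is that contracted edges forget their labels, so the embedding must be allowed to delete arbitrarily labeled edges. Treating this as Higman embedding, where skipped letters are arbitrary, is exactly right.

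The main obstacle is the second step, in particular checking that the wqo encoding of \cite{PR-genus} is genuinely compatible with labels. Contracting a labeled edge must correspond to deleting a letter or node, and surviving labels must match exactly. The admissible order also has to be refined to include labels while remaining compatible with composition. My first attempt will be to modify \cite{PR-genus}'s normal form by replacing each edge symbol $e$ with the pair $(e,\alpha(e))$ and then re-verify their embedding lemma verbatim. Directedness and property (F) should be routine.
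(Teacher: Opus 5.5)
Your proposal follows essentially the same route as the paper: pass to the rigidified category $P\cGg$, use Property (F) of the forgetful functor, and check (G1) and (G2). For (G2), both you and the paper absorb the finitely many edge-labels (of tree edges and of extra edges) into the vertex labels of the planar Kruskal encoding from \cite{PR-genus}.

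One small correction concerns your justification of (F). Taking only the rigidifications of the target $x$ does not suffice. A contraction $\Phi(y)\to x$ may contract extra (non-spanning-tree) edges of the given rigidified $y$, and no rigidified morphism out of $y$ can do that. Instead, take all rigidified $y_i$ with a contraction $\Phi(y_i)\to x$ collapsing at most $g$ edges; there are finitely many, since $|V_{\Phi(y_i)}|\le |V_x|+g$ and $S$ is finite. Then factor the given contraction by first contracting only the tree edges it collapses.
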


In different language, Theorem \ref{noetherian} states that the module theory of the category $\cGgop$ is \textbf{locally Noetherian}; i.e., all submodules of finitely generated modules must themselves be finitely generated.

This result and its proof are similar to the result \cite[Theorem 1.1]{PR-genus}. The only modification is the addition of edge-labels to the graph. With this modification, however, we will be able to apply this theorem to the case of generalized splines.

Philosophically, Theorem~\ref{noetherian} means a $\cGgop$-module $M$ that is finitely generated has a kind of uniformity in the presentations of its modules $M(G,\alpha)$.  We further elucidate this recurrent theme in Sections~\ref{section: algebraicity of trees} and~\ref{examples} by undergoing a more extensive study of splines on graphs of genus $0$, $1$, and $2$.

\subsection{Generalized splines and main results}\label{sec:gensplinesintro}

Let $(G,\alpha)$ be an edge-labeled graph. The \textbf{module of generalized splines} $\Spl_R(G,\alpha)$ is the $R$-submodule of $\bigoplus_{v \in V_G} R$ consisting of all elements
$\bp$ such that $\bp_u - \bp_v \in \alpha(\{u,v\})$ whenever $\{u,v\}$ is an edge. The module of generalized splines was introduced in \cite{GTV}, aiming to generalize the theory of classical splines (see Definition \ref{def:classicalSplines}), which has a long and varied history (see \cite{LST} for a recent survey).  Much of the work with generalized splines has been concerned with the case where $R$ is a polynomial ring over a field and the edge-label set $S$ consists of principal ideals (see e.g. \cite[Section 6]{AMT}).

Focusing on this setting is largely motivated by the classical picture of splines, where one studies piecewise functions on polygonal decompositions of a subspace of $\mathbb{R}^n$ with some differentiability conditions on the boundaries of the polygons. In addition, the same ring and edge-label set also arises naturally in topology, where generalized splines describe the torus-equivariant cohomology of algebraic varieties satisfying appropriate conditions \cite{GKM,Tym05,T16b,GTV}.

The (many) special algebraic properties of polynomial rings and principal ideals are used extensively---if sometimes implicitly---in those classical applications. One important observation about the results in this paper is that they apply whenever $R$ is \emph{any} Noetherian ring. This stands in stark contrast to various results that suggest that the form of (or even the ability to compute) the minimal generating sets for the module of splines is very sensitive to the choice of ring $R$.  (See, e.g., \cite{BT} for examples of how the behavior when $R = \Z/n \Z$ differs significantly from the case when $R$ is an integral domain.)

Before we state our primary result, we observe that if $\varphi\colon(G,\alpha) \rightarrow (G',\alpha')$ is a contraction of edge-labeled graphs, then one obtains a natural morphism of modules
\[
\varphi^\ast \colon \Spl_R(G',\alpha') \rightarrow \Spl_R(G,\alpha)
\]
by sending a spline $({\bf{p}}_v)_{v \in V_{G'}}$ to the spline $({\bf{p}}_{\varphi(w)})_{w \in V_G}$.
We call the latter spline a \textbf{vertex expansion} of the former.  (See Figures~\ref{contractionexample} and~\ref{vertexexpansion} for an example of a contraction together with the induced morphism of spline modules.) These morphisms turn the assignment $(G,\alpha) \mapsto \Spl_R(G,\alpha)$ into a $\cGgop$-module which we denote by $\Spl_R$. The object $\Spl_R$ is a single object that encodes all possible generalized splines across all edge-labeled graphs of combinatorial genus $g$ with edge-labels in $S$. In other words, $\Spl_R$ is a kind of categorification of spline analysis. Our primary result is as follows.

\begin{theorem}\label{splinefg}
Let $R$ be a commutative Noetherian ring with unit, $S$ be a finite set of ideals in $R$, and $g \geq 0$ be a fixed integer. Then the $\cGgop$-module $\Spl_R$ is finitely generated. In particular, there exists a finite set $\{(G_i,\alpha_i)\}_i$ of edge-labeled graphs of genus $g$, depending only on $R, S$, and $g$, such that for any edge-labeled graph $(G,\alpha)$ of genus $g$, the module of generalized splines $\Spl_R(G,\alpha)$ is generated by the images of the modules $\Spl_R(G_i,\alpha_i)$ under the maps induced by contraction.
\end{theorem}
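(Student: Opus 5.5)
The plan is to realize $\Spl_R$ as a submodule of a finitely generated $\cGgop$-module and then invoke Theorem~\ref{noetherian}. The natural ambient object is the \emph{vertex-function module} $F$, with $F(G,\alpha)=\bigoplus_{v\in V_G} R$ and $\varphi^*(\bp)_w=\bp_{\varphi(w)}$ for a contraction $\varphi$. By definition, $\Spl_R(G,\alpha)\subseteq F(G,\alpha)$. Vertex expansion is exactly the restriction of $\varphi^*$ on $F$, so $\Spl_R$ is a $\cGgop$-submodule of $F$. Each $F(G,\alpha)$ is a finitely generated $R$-module, and the same holds for $\Spl_R(G,\alpha)$ because $R$ is Noetherian. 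Thus both are legitimate $\cGgop$-modules in the sense of the paper.

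The main step is to show that $F$ is finitely generated. For each vertex $v$ of $(G,\alpha)$, let $e_v\in F(G,\alpha)$ be the indicator vector. Under pullback along a contraction $\varphi\colon (G,\alpha)\to(G',\alpha')$, we have $\varphi^*(e_{v'})=\sum_{w\in\varphi^{-1}(v')} e_w$. Pulling back indicators from contracted graphs therefore only produces indicators of fibers, i.e., of connected vertex sets. So $F$ is generated by graphs $G_i$ only if every $e_v$ in every $G$ is realized, possibly after taking $R$-linear combinations. I would try to bound this using a genus count.

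Contract a spanning tree of $G$ down to a bouquet of $g$ loops, labeled in all possible ways from $S$. This single-vertex graph pulls back to the constant vector $\mathbf{1}$. For $e_v$ itself, choose a spanning tree $T$ of $G$ and contract all edges of $T$ except those incident to $v$. The result is a graph with a distinguished vertex $v$ and a second vertex, which may carry loops, joined by the tree edges at $v$. This does not bound the size, since $\deg_T(v)$ is unbounded. However, $e_v$ is a difference of fiber indicators: take a contraction in which $v$ lies in a fiber by itself together with one other fiber, so that $e_v=\mathbf{1}-\varphi^*(e_{\text{other}})$. We can keep exactly one non-contracted edge from $v$ if $v$ is a leaf of some spanning tree. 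In general, because $g$ is fixed, all but boundedly many vertices have small "cyclic" degree. I would therefore reduce to the case where the remaining graph has at most roughly $2g+2$ vertices and $3g+\mathrm{const}$ edges. Up to the finitely many labelings by the finite set $S$, there are only finitely many such graphs, and these form the generating set for $F$.

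Concretely, I expect to show that the two-block partition $\{v\}\sqcup(V\setminus v)$ can be refined to a partition into connected blocks whose quotient has bounded size. This is plausible by the usual genus argument: the quotient of a genus-$g$ graph by connected blocks has genus at most $g$, and it has no vertices of degree one or two except boundedly many. This step is the main obstacle, and I would attempt it first using the structure theory of genus-$g$ graphs as subdivided trees attached to a bounded core, as in \cite{PR-genus}. If the fibers are not connected, I would instead replace $e_v$ by signed sums of fiber indicators. Once $F$ is known to be finitely generated, Theorem~\ref{noetherian} shows that $\Spl_R$ is finitely generated. The ``in particular'' statement is then a direct restatement of the definition of generators, and the dependence only on $R,S,g$ is visible from the construction.
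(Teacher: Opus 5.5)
There is a genuine gap. Your architecture matches the paper's: embed $\Spl_R$ in the vertex-function module $F$ and apply Theorem~\ref{noetherian}. But the one step with real content, finite generation of $F$, is left as ``the main obstacle,'' and the concrete route you propose for it cannot work.

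Your claim that $\{v\}\sqcup(V_G\setminus v)$ can be refined to a partition into connected blocks with bounded quotient already fails in genus $0$. Take $G=K_{1,n}$ with center $v$. Then $G-v$ has $n$ components, so any partition into connected blocks having $\{v\}$ as a block has at least $n+1$ blocks. For the same reason, $e_v=\mathbf{1}-\varphi^*(e_{\text{other}})$ is unavailable, since $V_G\setminus v$ is disconnected and cannot be a fiber. Hence no single contraction to a graph with fewer than $n+1$ vertices exhibits $e_v$ as a fiber indicator or as $\mathbf{1}$ minus one. The obstruction is a cut vertex of unbounded degree, which occurs in every genus, so genus counting and the ``bounded core'' structure theory cannot rescue a one-contraction-per-vertex approach.

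The missing idea is the one you list only as a fallback: take linear combinations of pullbacks along \emph{different} contractions. Combine this with an induction on $|V_G|$ in place of a direct size bound.

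Because contractions compose, it suffices to show the following. For $|V_G|\ge 3$, each $e_u$ lies in the $R$-span of the images $\varphi^*F(G')$ over contractions $\varphi\colon G\to G'$ with $|V_{G'}|<|V_G|$. Then $F$ is generated by the finitely many $S$-labeled genus-$g$ graphs with at most two vertices. The cases are:
\begin{enumerate}
\item If some non-loop edge avoids $u$, contract only that edge. Then $\{u\}$ remains a singleton fiber, so $e_u$ is a pullback.
\item Otherwise every non-loop edge meets $u$; pick a neighbor $v$. A third vertex $w$ exists and is joined to $u$, so some non-loop edge avoids $v$, and $e_v$ is a pullback by the first case.
\item Contracting one $u$--$v$ edge exhibits $e_u+e_v$ as a pullback, so $e_u=(e_u+e_v)-e_v$.
\end{enumerate}
This is essentially the paper's proof. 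It never needs a bounded-size quotient containing $\{v\}$ as a block, which is why it evades the star obstruction.
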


For the sake of concreteness, one may rephrase this theorem to read as follows.

\begin{theorem}\label{splinefgConcrete}
Let $R$ be a commutative Noetherian ring with unit, $S$ be a finite set of ideals of $R$, and $g \geq 0$ be a fixed integer. Then there exists a finite set of edge-labeled graphs $\{(G_i,\alpha_i)\}_i$ of genus $g$  such that for any choice $\{\beta_{i,j}\}_j$ of generating set of each $\Spl_R(G_i,\alpha_i)$, the following holds: If $(G,\alpha)$ is any edge-labeled graph of genus $g$, then the module of splines $\Spl_R(G,\alpha)$ is generated by vertex expansions of the splines $\beta_{i,j}$.
\end{theorem}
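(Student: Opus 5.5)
We deduce Theorem~\ref{splinefgConcrete} from Theorem~\ref{splinefg}. The plan has three steps.

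First, apply Theorem~\ref{splinefg}. This produces a finite set $\{(G_i,\alpha_i)\}_i$ of edge-labeled graphs of genus $g$ with the following property. For every edge-labeled graph $(G,\alpha)$ of genus $g$,
\[
\Spl_R(G,\alpha)=\sum_{i}\ \sum_{\varphi\colon (G,\alpha)\to (G_i,\alpha_i)} \varphi^\ast\bigl(\Spl_R(G_i,\alpha_i)\bigr),
\]
where $\varphi$ ranges over all morphisms in $\cGg$ from $(G,\alpha)$ to $(G_i,\alpha_i)$. Such morphisms are compositions of contractions, i.e.\ contractions of subsets of edges. Since $G$ is finite, there are only finitely many such $\varphi$. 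Each $\Spl_R(G_i,\alpha_i)$ is a submodule of the finitely generated module $\bigoplus_{v} R$ over the Noetherian ring $R$. Hence it is finitely generated, so a finite generating set $\{\beta_{i,j}\}_j$ exists. Any such choice will do.

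Second, fix an arbitrary generating set $\{\beta_{i,j}\}_j$ of each $\Spl_R(G_i,\alpha_i)$. Each $\varphi^\ast$ is an $R$-module homomorphism. Therefore $\varphi^\ast(\Spl_R(G_i,\alpha_i))$ is the $R$-span of the elements $\varphi^\ast(\beta_{i,j})$. By definition, $\varphi^\ast(\beta_{i,j})$ is the spline $\bigl((\beta_{i,j})_{\varphi(w)}\bigr)_{w\in V_G}$, which is a vertex expansion of $\beta_{i,j}$. We use here the fact that the map induced by a composite of contractions is the composite of the induced maps. This holds because $\Spl_R$ is a functor, and the resulting spline is still obtained by pulling back along the vertex map.

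Third, substitute into the displayed equation. This shows that $\Spl_R(G,\alpha)$ is spanned by vertex expansions of the $\beta_{i,j}$, which is the claim.

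There is no genuine obstacle. The only point needing care is bookkeeping: the notion of "generated by the images under maps induced by contraction" in Theorem~\ref{splinefg} must include all composite morphisms $(G,\alpha)\to(G_i,\alpha_i)$ in $\cGg$. This is the definition of finite generation given before Theorem~\ref{noetherian}. We also need "vertex expansion" to be read as pullback along an arbitrary such morphism, consistent with the description of $\varphi^\ast$ preceding Theorem~\ref{splinefg}.
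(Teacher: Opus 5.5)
Your proposal is correct and takes essentially the paper's route. The paper gives no separate proof and presents Theorem~\ref{splinefgConcrete} simply as a rephrasing of Theorem~\ref{splinefg}; your unpacking, which uses the $R$-linearity of each $\varphi^\ast$ to pass from the images $\varphi^\ast(\Spl_R(G_i,\alpha_i))$ to the vertex expansions $\varphi^\ast(\beta_{i,j})$ of an arbitrary generating set, is exactly the implicit argument.
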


We note that Theorems~\ref{splinefg} and~\ref{splinefgConcrete} are \emph{not constructive}; that is, they do not produce the finite collection of edge-labeled graphs $\{(G_i,\alpha_i)\}_i$.  Nevertheless, they assert the existence of such a finite collection. In simple examples, it is possible to produce this collection: as a first example, consider the situation where $S = \{I\}$ consists of a single ideal in an arbitrary commutative Noetherian ring $R$ with unit. In this case, a generalized spline on an edge-labeled graph $(G,\alpha)$ is a tuple $({\bp}_v)_{v \in V_G}$ of elements of $R$ such that $\bp_u$ and $\bp_v$ are equal modulo $I$ whenever $\{u,v\}$ is an edge. Fix a (finite) generating set $x_1,\ldots,x_r$ for $I$. It has been shown in \cite[Theorem 10]{AMT} that $\Spl_R(G,\alpha)$ is generated by the all-ones vector $\mathbf{1}$, along with the indicator vectors which assign $x_i$ to the $u$-th coordinate and $0$ to all other coordinates. It follows from this that the $\cGgop$-module $\Spl_R$ must be generated by the rose, consisting of one vertex and $g$ loops, along with the (finitely many) genus $g$ graphs that have precisely two vertices.  See Sections \ref{section: algebraicity of trees} and \ref{section: genus one and two} for more involved examples and applications.

We end this section by highlighting some applications of Theorem~\ref{splinefgConcrete} (compare these to the bulleted list directly preceding Section~\ref{subsec:intronoeth} of the introduction):
\begin{itemize}
\item \textbf{An inductive construction of flow-up generating sets:} Given a total order $<$ on the vertices of an edge-labeled graph $(G,\alpha)$, we say that a spline $\bp$ is \textbf{flow-up} at $v$ if there is some vertex $v$ of $G$ such that $\bp_w = 0$ for all $w < v$. Flow-up generating sets for the module of splines are analogous to upper-triangular bases, and thus extremely useful for computational purposes. It is not known when one can find a minimum generating set of flow-up splines---i.e. a \textbf{flow-up basis}---though it is known to be true when $R = \mathbb{Z}/n\mathbb{Z}$ \cite{PSTW}. Our machinery constructs a version of universal flow-up generators (see Remark \ref{flow-up})---though it says nothing about minimality of these generators. 
\item \textbf{Algebraicity of the generating function for splines on trees over rings meeting finiteness constraints:} Literature on the representation theory of categories shows that---provided the combinatorics of the category is nice enough---one can often prove theorems about the types of growth permitted in the dimensions of the associated modules. In \cite{RamTree}, the second author showed that a particular generating function associated to representations of the category of rooted trees and contractions must be algebraic, in the sense that it satisfies a finite polynomial equation over $\Q(n)$. In this work we expand the theorem of the second author to show that the dimension growth of tree splines over certain finite and finite dimensional rings must also be algebraic in nature (see Theorem \ref{algebraic}). 
\item \textbf{Classification of splines on graphs of genus 0, 1, and 2:} We explicitly construct generators in the cases of genus zero (Section~\ref{section: algebraicity of trees}) and in genus one and two (Section \ref{section: genus one and two}).
\end{itemize}

\begin{remark}
In the recent work \cite{RS}, Rose and Suzuki consider splines over Euclidean domains. They construct universal flow-up splines that generate all graphs by considering a finite set of generating graphs obtained through a sequence of graph-theoretic operations on the originating graph. Only one of these operations involves contraction, the others being more purely combinatorial in nature. After the publication of \cite{RS}, Dilaver and Alt{\i}nok \cite{D25} provided an entirely different method for generating flow up bases over the integers. These works are related to Theorem \ref{splinefgConcrete} in spirit, though neither our results nor theirs imply the other. We summarize some differences:

\begin{itemize}
\item \cite{RS} works over Euclidean domains, whereas we consider splines over arbitrary Noetherian rings.
\item The methodology of \cite{RS} allows them to work with all graphs simultaneously, while we always fix the genus of the graphs being considered.
\item The techniques of \cite{RS} do not involve the use of representation stability or of representations of categories. As a consequence, they do not obtain results on growths of dimensions as we do.
\end{itemize}
\end{remark}

\section{Gr\"obner categories}\label{sec:grob}

In this section we review the necessary background on Sam and Snowden's theory of Gr\"obner categories \cite{sam}. We will ultimately use this theory to prove our main technical theorem, Theorem~\ref{noetherian}. Much of the exposition that follows closely mirrors \cite{PR-genus}.

We fix, for the remainder of this paper, a commutative Noetherian ring $R$ with unit, along with a finite set of ideals $S$ of $R$.

\subsection{Contractions of edge-labeled graphs}

\begin{definition}\label{def:contraction}

An \textbf{edge-labeled graph} is a pair $(G,\alpha)$, where $G =(V_G,E_G,\partial)$ is a triple comprised of two finite sets $V_G$ and $E_G$---the \textbf{vertex} and \textbf{edge} sets, respectively---and $\partial \colon E_G \sqcup V_G \rightarrow  \binom{V_G}{2} \sqcup V_G$ is the boundary map that sends each edge to its endpoints and each vertex to itself, and $\alpha \colon E_G \rightarrow S$ is a map of sets (see Figure \ref{labeleduncontractedgraph}).

\begin{figure}
    \centering
    \includegraphics{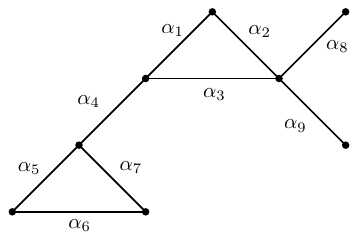}
    \caption{An example of an edge-labeled graph. The $\alpha_i$ live in the fixed finite set $S$ of ideals of our ring $R$.}
    \label{labeleduncontractedgraph}
\end{figure}

Those edges $e \in E_G$ with $\partial(e) \in V_G$ are known as \textbf{loops}. We exclusively consider graphs that are \textbf{connected}, in that for any two vertices $a,b \in V_G$ there exists a sequence of vertices $a = a_0,a_1,\dots, a_{n-1},a_n = b$ such that for all $0 \le i \le n-1$, we have $\{a_i,a_{i+1}\} = \partial(e_i)$ for some edge $e_i$. The \textbf{(combinatorial) genus} of a graph $G$ is the quantity $g := |E_G| - |V_G| + 1$. A graph of genus $0$ is known as a \textbf{tree}. 

Given two edge-labeled graphs $(G,\alpha),(G',\alpha')$, a \textbf{contraction} from $(G,\alpha)$ to $(G',\alpha')$ is a map
\[
\varphi \colon V_G \sqcup E_G \rightarrow V_{G'} \sqcup E_{G'}
\]
satisfying the following conditions:

\begin{enumerate}
\item $\varphi(V_G) = V_{G'}$ (i.e. vertices map to vertices, and this assignment is surjective);
\item if $e \in E_G$, then $\varphi(\partial(e))  = \partial(\varphi(e))$ (i.e. the map preserves adjacency);
\item the restriction $\varphi|_{\varphi^{-1}(E_{G'})}$ is a bijection, and $\alpha' \circ \varphi|_{\varphi^{-1}(E_{G'})}(e) = \alpha(e)$ for all $e \in \varphi^{-1}(E_{G'})$ (i.e. the edges of $G$ that are not sent to $V_{G'}$ are in bijective correspondence with the edges of $G'$ and carry the same edge-label);
\item for every vertex $v \in V_{G'}$, the preimage $\varphi^{-1}(v)$ is a subtree of $G$ (i.e. one never contracts a cycle).
\end{enumerate}
\end{definition}

If $\varphi$ is a contraction, we will refer to those edges of its domain that are sent to vertices as the \textbf{contracted edges}. An example of a contraction is depicted in Figure~\ref{contractionexample}: note that the final condition above implies that a contraction $\varphi$ may contract more than one edge, so long as a cycle is not contracted. We also note that graph automorphisms, in the usual sense, are contractions according to the above definition. 

\begin{figure}
\centering
\tikz[remember
picture]{\node(1BL){\includegraphics{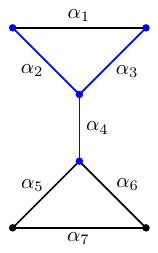}};}%
\hspace*{3cm}%
\tikz[remember picture]{\node(1BR){\includegraphics{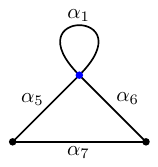}};}

\caption{A contraction of a labeled graph of genus $2$. The contracted edges are colored blue.}

\label{contractionexample}

\end{figure}

\subsection{Graph categories}

We now develop edge-labeled versions of the categories in \cite{PR-trees,PR-genus}. We  write $\cGg$ for the category whose objects are edge-labeled graphs of genus $g$ and whose morphisms are contractions. An \textbf{edge-labeled rooted graph $(G,T,v,\alpha)$} is an edge-labeled graph $(G,\alpha)$ together with a choice of rooted spanning tree $(T,v)$ (see Figure~\ref{rigidexamples} left).

A contraction between edge-labeled rooted graphs $\varphi \colon (G,T,v,\alpha) \rightarrow (G',T',v',\alpha')$ is a contraction between the underlying edge-labeled graphs with the added conditions that
\begin{enumerate}
\item $\varphi(v) = v'$ (i.e. the root is sent to the root);
\item for any edge $e \in E_G$, if $\varphi(e) \in V_{G'}$ then $e$ is an edge of $T$ (i.e. one is only permitted to contract edges within the provided spanning tree).
\end{enumerate}

The category of edge-labeled rooted graphs of genus $g$ with contractions is denoted $R\cGg$. 

Finally, we will choose additional structure that both allows us to treat $G$ as if it were drawn with a planar embedding, and breaks endomorphisms. Note that each edge in a rooted tree $(T,v)$ can be unambiguously directed towards the endpoint farther from the root. Using these edge-directions, we can recreate a notion of ``left-to-right" for not-necessarily-planar graphs: at each vertex $u \in T$, choose a total ordering $\sigma_v$ of the edges in $T$ directed out of $u$.  This data further allows us to use a depth-first algorithm to define a total ordering $<$ on the vertices of $T$, with $\sigma_u$ determining which order to explore the edges leaving each vertex $u$.

Putting this together, a \textbf{rigidified edge-labeled graph $(G,T,v,\alpha,\sigma,\tau)$} is an edge-labeled rooted graph equipped with a collection of total orderings $\sigma = \{\sigma_u\}_{u \in T}$,  one for each vertex $u$ of $T$, of the edges out of $u$ when the rooted tree $T$ is directed as above, as well as a total ordering $\tau$ of the edges outside the spanning tree $T$, also called the \textbf{extra edges} (see Figure~\ref{rigidexamples} right). A contraction of rigidified edge-labeled graphs is a contraction of the underlying edge-labeled rooted graphs with the extra conditions that
\begin{enumerate}
\item for any extra edge $e\in E_G$, we have $\tau(e) = \tau'(\varphi(e))$ (i.e. the total ordering on the extra edges is \emph{preserved} under contraction);
\item for any two vertices $v,w \in V_G$, if $v < w$ in the depth-first total order with edge-order determined by $\sigma_u$ at each vertex $u$, then $\varphi(v) < \varphi(w)$ (i.e. contractions respect the depth-first total order on vertices).
\end{enumerate}

We write $P\cGg$ for the category of rigidified edge-labeled graphs of genus equal to $g$ with contractions as morphisms.

\begin{figure}
    \centering
    \includegraphics{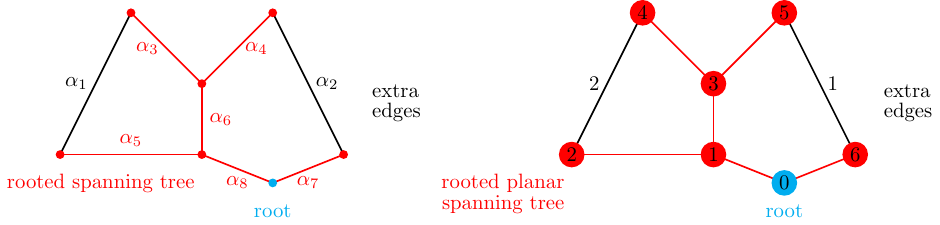}
    \caption{Examples of a rooted graph of genus $2$, as well as that same graph rigidified, respectively. The edge-labels have been omitted from the rigidified graph for clarity. The numbers next to the extra edges in the rigidified graph indicate their internal order $\tau$. {\color{red} In this case, the ordering $\sigma_u$ at each vertex is ``left to right" in the planar embedding of the rooted spanning tree, so the vertices are labeled by the usual depth-first search from the root.}}
    \label{rigidexamples}
\end{figure}

The category $P\cGg$ will be used as a technical tool for proving Theorem \ref{noetherian}. In particular, rigidified edge-labeled graphs have been designed to disallow endomorphisms (recall that graph automorphisms appear in $\cGg$), as it is impossible to map in a nontrivial way from a rigidified edge-labeled graph to itself while preserving all necessary structures. This is critical for the Sam--Snowden Gr\"obner category theory which we use in the proof of Theorem \ref{noetherian}. Because our set of permissible labels $S$ is finite, the proof of Theorem \ref{noetherian} will be almost identical to the proof appearing in \cite[Theorem 1.1]{PR-genus} of Noetherianity in the unlabeled case. As such, we will oftentimes direct the reader to that work for a more detailed explication of certain steps.

\begin{definition}\label{def:modulefggens}
If $\mathcal{C}$ is any one of the above categories, and $R$ is a commutative Noetherian ring with unity, then a \textbf{$\mathcal{C}^{\op}$-module over $R$} is a covariant functor $M\colon\mathcal{C}^{\op} \rightarrow R\text{-mod}$ from $\mathcal{C}^{\op}$ to the category of finitely generated $R$-modules. We say that a $\mathcal{C}^{\op}$-module is \textbf{finitely generated} if there exists a finite list of objects $\{G_i\}_i$ of $\mathcal{C}$ such that for any object $G$ of $\mathcal{C}$, the module $M(G)$ is spanned by the images of the $M(G_i)$ under the maps induced by contractions. We call the members of this finite list \textbf{generators} for $M$.
\end{definition}

The category of $\mathcal{C}^{\op}$-modules with natural transformations as morphisms is an abelian category with all abelian operations defined pointwise. In particular, one may consider \textbf{submodules} of a $\mathcal{C}^{\op}$-module. More concretely, a submodule $N$ of a $\mathcal{C}^{\op}$-module $M$ is a collection of $R$-submodules $N(G) \subseteq M(G)$, one for each object $G$ of $\mathcal{C}$, which the restrictions of the induced maps of $M$ preserve. Understanding the submodules of a finitely generated module is the major content of Theorem \ref{noetherian}.

We now collect a series of results that we will need for the proof of Theorem~\ref{noetherian}, which appears at the end of this subsection.

\begin{lemma}\label{PF}
If all submodules of finitely generated $P\cGgop$-modules are once again finitely generated, then the same is true of $R\cGgop$-modules and $\cGgop$-modules.
\end{lemma}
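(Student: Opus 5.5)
The plan is the standard Sam--Snowden transfer: use forgetful functors $\Phi_1\colon P\cGg \to R\cGg$ and $\Phi_2\colon R\cGg \to \cGg$, which discard the rigidifying data $(\sigma,\tau)$ and then the rooted spanning tree $(T,v)$, and show that each has \emph{property (F)}. Recall that a functor $\Phi\colon \mathcal{C}\to\mathcal{D}$ has property (F) if, for every object $x$ of $\mathcal{D}$, there are finitely many objects $y_1,\dots,y_n$ of $\mathcal{C}$ and morphisms $f_i\colon \Phi(y_i)\to x$ in $\mathcal{D}$ with the following lifting property. For every object $y$ of $\mathcal{C}$ and every morphism $f\colon \Phi(y)\to x$, there is some $i$ and a morphism $g\colon y\to y_i$ in $\mathcal{C}$ with $f = f_i\circ\Phi(g)$.

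Sam and Snowden show that if $\Phi$ has property (F), then pulling back along $\Phi$ sends finitely generated $\mathcal{D}^{\op}$-modules to finitely generated $\mathcal{C}^{\op}$-modules. It follows that local Noetherianity of $\mathcal{C}^{\op}$-modules implies local Noetherianity of $\mathcal{D}^{\op}$-modules. The reason is that $\Phi^*$ is exact and faithful on submodule lattices: an ascending chain of submodules of $M$ pulls back to an ascending chain in $\Phi^*M$, and $\Phi$ is essentially surjective. Property (F) is closed under composition, so the two cases together give the lemma.

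The two verifications are as follows.

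\textbf{The functor $\Phi_2$.} Fix an edge-labeled graph $(G',\alpha')$. Take as the $y_i$ all rootings $(G',T',v',\alpha')$, of which there are finitely many, with each $f_i$ the identity. Now let $f\colon(G,\alpha)\to(G',\alpha')$ be a contraction, where $(G,T,v,\alpha)$ is rooted. We must produce a rooting of $G'$ making $f$ a morphism in $R\cGg$.

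The natural candidate is $T' = f(T)$: the non-contracted edges of $T$ together with the vertices of $G'$, rooted at $f(v)$. For $f$ to be a morphism of rooted graphs, every contracted edge must lie in $T$. This is the obstacle, because $f$ may contract edges outside $T$.

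I would handle it by first altering $T$ and then fixing $f$. The contracted edges of $G$ form a forest, so one can exchange edges of $T$ to obtain a new spanning tree containing that forest. With this modified $T$, the image $f(T)$ is a spanning tree of $G'$, because $T$ minus the contracted edges maps bijectively onto a connected spanning acyclic subgraph of $G'$; genus is preserved, so the edge counts match.

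The catch is that $T$ is part of the data of $y$ and cannot be changed. So instead I would use the freedom in choosing the $y_i$ and $f_i$. The correct bookkeeping is: let the $y_i$ range over all rooted structures on all contractions $x''$ of $x$ that have the same genus, with $f_i$ the corresponding contraction. Then factor $f$ through a contraction of $G$ that only contracts edges of $T$. Verifying that this factorization exists is the subtle step, and it is precisely the argument of \cite{PR-genus}. The edge labels ride along, since contractions preserve labels on surviving edges and $S$ is finite, so finiteness of the set of the $y_i$ is unaffected.

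\textbf{The functor $\Phi_1$.} For a rooted $x'=(G',T',v',\alpha')$, the $y_i$ are the finitely many rigidifications $(\sigma',\tau')$ of $x'$, each with $f_i$ the identity. Given a rigidified $y$ and a rooted contraction $f\colon \Phi_1(y)\to x'$, the rigidification of $y$ is pushed forward as follows.

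The extra edges of $G$ biject with those of $G'$ under $f$, so $\tau$ transfers to an ordering $\tau'$.

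For $\sigma'$, each vertex $u'$ of $T'$ is the image of a subtree of $T$. Its outgoing $T'$-edges correspond to $T$-edges leaving that subtree, and these inherit an order from the depth-first order on $T$. One checks that the depth-first order on $T'$ then satisfies $f(a)<f(b)$ whenever $a<b$, using that the preimages are subtrees which are intervals in the depth-first order, as in \cite{PR-trees}. Hence $f$ itself is a morphism $y\to y_i$ in $P\cGg$.

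The main obstacle I expect is the $\Phi_2$ step, namely reconciling contractions of non-tree edges with the rooted structure. There I would follow \cite[proof of Theorem 1.1]{PR-genus} verbatim, adding labels throughout.
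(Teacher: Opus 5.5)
Your overall route is the paper's. The paper's entire proof is the assertion that the forgetful functors out of $P\cGg$ have Sam--Snowden's property (F), combined with their transfer result for essentially surjective functors, and your formal reduction (dualized (F), closure under composition, pulling back ascending chains) is fine.

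The gap is in the one verification you actually carry out, for $R\cGg\to\cGg$. You take the $y_i$ to be rooted structures on ``all contractions $x''$ of $x$,'' with $f_i$ the corresponding contraction. For $f_i\colon \Phi(y_i)\to x$ to exist, $x''$ must contract \emph{onto} $x$. The family of all such genus-$g$ labeled graphs is infinite, since you can attach arbitrarily large trees to $x$. So finiteness, which is the whole content of (F), is never established, and saying the labels ``ride along'' does not supply it. The factorization itself is easy; what is missing is a bound:
\begin{enumerate}
\item Let $F$ be the forest of edges contracted by $f$. Let $q\colon G\to G'':=G/(F\cap T)$ contract only the tree edges in $F$.
\item Give $G''$ the tree $q(T)$, the root $q(v)$, the induced (spliced) orders $\sigma$ and the transferred $\tau$. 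Then $q$ is a morphism of rigidified graphs.
\item Now $f=h\circ q$, where $h$ contracts the images of $F\setminus T$. These are non-tree edges of $G$, so there are at most $g$ of them.
\end{enumerate}
Hence $G''$ is one of the finitely many $S$-labeled genus-$g$ graphs that contract onto $x$ by contracting at most $g$ edges. All rigidified structures on these, with all such $h$, give the finite list, and this verifies (F) directly for $P\cGg\to\cGg$.

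Your justification for $P\cGg\to R\cGg$ is also incorrect as written: fibers of a contraction are not intervals in the depth-first order. Suppose the root $u$ has children $c$ then $w$, and you contract $uw$. Then $u<c<w$, and $c<w$ while $f(w)=u<c=f(c)$, for every choice of $\sigma'$. What is true is that, with the spliced planar structure, target vertices are ordered exactly as the minima of their fibers. This is how the order axiom for rigidified contractions must be read (compare the use of $\varphi^{\op}$ in Proposition~\ref{prop:G1}). If morphisms had to be monotone on all vertices, (F) would in fact fail. Take a root with children $c,a_1,\dots,a_m$ and contract the leaves $a_1,\dots,a_m$ into the root. This map would factor through no smaller rigidified graph, so no finite list of $y_i$ could work.
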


\begin{proof}
By how these categories are defined, it is immediate that the forgetful functors from $P\cGgop$-modules to $R\cGgop$-modules and to $\cGgop$-modules satisfy Property (F) in \cite[Definition 3.2.1]{sam}. This implies our desired conclusion.
\end{proof}

In light of the previous lemma, it is sufficient to consider modules over $P\cGop_{g,S}$. Indeed, to prove Theorem~\ref{noetherian} we will show that $P\cGop_{g,S}$ is \textbf{Gr\"obner} in the sense of \cite{sam}, which is strictly stronger than the statement that its module theory is (locally) Noetherian.

Proving that $P\cGop_{g,S}$ is Gr\"obner amounts to showing the following two facts about the morphisms of $P\cGop_{g,S}$:

\begin{itemize}
\item (G1) For any rigidified edge-labeled graph $(G,T,v,\alpha,\sigma,\tau)$, there exists a well order $\prec$ on the set $\mathcal{H}_{(G,T,v,\alpha,\sigma,\tau)}$ of isomorphism classes of morphisms with domain $(G,T,v,\alpha,\sigma,\tau)$ such that for any two elements $\varphi,\psi$ of $\mathcal{H}_{(G,T,v,\alpha,\sigma,\tau)}$ with $\varphi \prec \psi$, we have $h \circ \varphi \prec h \circ \psi$ for any morphism $h$ for which these compositions make sense.
\item (G2) For any rigidified edge-labeled graph $(G,T,v,\alpha,\sigma,\tau)$, the poset  $(\mathcal{H}_{(G,T,v,\alpha,\sigma,\tau)},\leq)$, where $\leq$ is the natural factor-through order defined by $\varphi \leq \psi$ if and only if there is some $\eta$ such that $\eta \circ \varphi = \psi$ does not admit infinite descending chains or infinite anti-chains.
\end{itemize}

As the name suggests, Gr\"obner theory is heavily influenced by the classical notion of Gr\"obner bases in polynomial rings. In that more classical theory, one must choose a (non-canonical) monomial order. The ability to make such a choice in our setting is the essential content of condition (G1). In the classical setting, once one has used this monomial order to reduce the question of Noetherianity to proving that all monomial ideals are finitely generated, one then argues that the set of all monomials forms a poset under the (canonical) divisibility order, and that the ideals of this poset are all finitely generated. The ability to perform this step is the essential content of condition (G2).

At this point of our exposition, we have imposed a number of orders on a number of different objects. We therefore take just a moment to collect all of them in one place for the reader's convenience.

\begin{itemize}
    \item Each rigidified graph $(G,T,v,\alpha,\sigma,\tau)$ is equipped with the data of a rooted spanning tree (and thus directions on the edges of $T$) and a collection $\sigma = \{\sigma_u\}_{u \in T}$ that chooses, for each vertex $u$, a total ordering $\sigma_u$ of the edges in $T$ directed out of $u$ (to mimic ``left-to-right"). Using the ordering of edges $\sigma_u$ at each vertex $u$, we can define a depth-first search on the vertex set of the graph, thus obtaining a total order on $V_G$. (In other words, begin at the root, and walk along the edges of the spanning tree, choosing the minimum available amongst $\sigma_u$ at each turn.) The depth-first order on the vertex set is usually the most relevant feature of all of this data. For instance, the definition of morphisms in the category $P\cGgop$ requires that this depth-first order be respected, in that if one vertex is below another in the order, then the image of the smaller vertex must be smaller than the image of the larger vertex.
    \item The rigidified graph also contains the data of a total ordering on its edges that are not in the spanning tree (i.e. the extra edges). Because we only work in circumstances where the genus is a fixed integer $g$, rigidified graphs will always have precisely $g$ extra edges. Morphisms in the category of rigidified graphs are required to \emph{preserve} this ordering, in that each extra edge must map to an extra edge with exactly the same label. Both of these first two orders exist primarily to remove symmetry (e.g. endomorphisms) from the category of edge-labeled graphs, while also not losing too much information from the morphisms of $\cGgop$. In other words, every morphism of $\cGgop$ appears \emph{somewhere} in $P\cGgop$, it is just a matter of choosing the correct data for the domain and codomain graphs to find it.
    \item Having fixed a rigidified graph $(G,T,v,\alpha,\sigma,\tau)$, we have two important orderings on the set of isomorphism classes of maps with domain $(G,T,v,\alpha,\sigma,\tau)$. The first of these orderings is a non-canonical well order that we denote $\prec$. We require that $\prec$ respect composition in that if $\varphi \prec \psi$, 
    then $h \circ \varphi \prec h \circ \psi$, for any morphism $h$ in $P\cGgop$ for which the composition makes sense. It is not immediately obvious that such an order $\prec$ exists. We will prove this in Proposition \ref{prop:G1}.
    \item The second ordering on isomorphism classes of morphisms originating from a fixed $(G,T,v,\alpha,\sigma,\tau)$ is the canonical divisibility order, which we denote by $\leq$. In this order, $\varphi \leq \psi$ if and only if there exists some morphism $f$ in $P\cGgop$ such that $f \circ \varphi = \psi$. Note that, by working in $P\cGgop$ and thereby eliminating non-trivial endomorphisms, the divisibility order becomes a poset order.
\end{itemize}

\begin{proposition}\label{prop:G1}
The category $P\cGgop$ satisfies property (G1).
\end{proposition}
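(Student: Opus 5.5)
We will build the well order directly on the morphisms of $P\cGgop$, and we expect no substantial obstacle: every graph admits only finitely many contractions to a fixed $G$, and contractions are monotone on the depth-first vertex order.

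\textbf{Unwinding the statement.} Fix a rigidified graph $\mathbf{G}=(G,T,v,\alpha,\sigma,\tau)$. A morphism of $P\cGgop$ with domain $\mathbf{G}$ is a contraction $\varphi\colon \mathbf{G}'\to\mathbf{G}$ in $P\cGg$, where $\mathbf{G}'$ is some rigidified graph of genus $g$. Its isomorphism class is the pair consisting of the isomorphism class of $\mathbf{G}'$ and $\varphi$. Since $P\cGg$ has no nontrivial automorphisms, this pair is simply $(\mathbf{G}',\varphi)$ with $\mathbf{G}'$ taken up to unique isomorphism.

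Post-composition in $P\cGgop$ by a morphism out of $\mathbf{G}'$ is pre-composition in $P\cGg$ by a contraction $h\colon \mathbf{H}\to\mathbf{G}'$. So (G1) asks for a well order $\prec$ on these pairs with the following property: whenever $\varphi,\psi\colon\mathbf{G}'\to\mathbf{G}$ satisfy $\varphi\prec\psi$, we have $\varphi\circ h\prec\psi\circ h$ for every contraction $h\colon\mathbf{H}\to\mathbf{G}'$. The two morphisms being compared always share a source.

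\textbf{Step 1: a contraction is determined by its vertex map.} We first check that $\varphi\colon\mathbf{G}'\to\mathbf{G}$ is determined by its restriction $V_{G'}\to V_G$.
\begin{itemize}
\item Extra edges of $G'$ map bijectively to extra edges of $G$ preserving $\tau$, since both graphs have exactly $g$ extra edges. Their images are therefore forced.
\item Tree edges of $G'$ are either contracted, or sent to tree edges of $G$, because the extra edges of $G$ are already used up. A tree edge of $G$ is determined by its endpoints, since the spanning tree has no multi-edges.
\end{itemize}
We then encode $\varphi$ by the word
\[
w(\varphi)=\varphi(x_1)\varphi(x_2)\cdots\varphi(x_n)\in V_G^{\,n},
\]
where $x_1<\dots<x_n$ is the depth-first order on $V_{G'}$. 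This encoding is injective.

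\textbf{Step 2: defining $\prec$.} The set of isomorphism classes of rigidified genus-$g$ graphs with labels in $S$ and a fixed number of vertices is finite, because $S$ is finite and $g$ is fixed. We therefore choose any well order on all isomorphism classes of objects that first compares the number of vertices. We then set $(\mathbf{G}',\varphi)\prec(\mathbf{G}'',\psi)$ if either $\mathbf{G}'$ precedes $\mathbf{G}''$, or $\mathbf{G}'=\mathbf{G}''$ and $w(\varphi)$ is lexicographically smaller than $w(\psi)$, for some fixed total order on the finite set $V_G$. This is a well order: it is an ordinal sum, indexed by a well order, of finite totally ordered sets, since there are only finitely many words of length $n$ over $V_G$.

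\textbf{Step 3: compatibility.} Suppose $\varphi\prec\psi$, both with source $\mathbf{G}'$, so that $w(\varphi)<_{\mathrm{lex}}w(\psi)$. Both $\varphi h$ and $\psi h$ have source $\mathbf{H}$, so only their words need comparing.

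The contraction $h$ is surjective on vertices and respects the depth-first orders. Hence $y\mapsto h(y)$ is a weakly increasing surjection $V_H\to V_{G'}$. Let $x_k$ be the first vertex at which $\varphi$ and $\psi$ differ.
\begin{itemize}
\item For every $y\in V_H$ with $h(y)<x_k$, we have $\varphi h(y)=\psi h(y)$.
\item The first $y$ with $h(y)=x_k$ exists by surjectivity. By monotonicity it is exactly the first position at which $w(\varphi h)$ and $w(\psi h)$ differ.
\item At that position the letters are $\varphi(x_k)<\psi(x_k)$.
\end{itemize}
Hence $\varphi h\prec\psi h$. The only point needing care is the reduction to vertex maps in Step 1; everything else is formal.
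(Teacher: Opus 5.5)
Your proof is correct and follows essentially the same route as the paper: order morphisms first by their source object, then compare contractions with a common source lexicographically by their vertex images in depth-first order, with compatibility coming from the fact that contractions are weakly monotone surjections on vertices. Your version is more careful than the paper's in three places. You choose an honest well order on objects (by vertex count, with finitely many objects of each size, so no choice is needed), whereas the paper only asks for a total order. You check that a contraction is determined by its vertex map. And you spell out why the first differing position survives pre-composition.
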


\begin{proof}
Choose a total order on the set of isomorphism classes of objects of $P\cGgop$, and denote it $\prec_O$. Such an order can be chosen assuming the axiom of choice. Fix a rigidified edge-labeled graph $(G,T,v,\alpha,\sigma,\tau)$. Now, for any isomorphism classes of morphisms $\varphi\colon(G,T,v,\alpha,\sigma,\tau) \rightarrow (G',T',v',\alpha',\sigma',\tau')$ and $\psi\colon(G,T,v,\alpha,\sigma,\tau) \rightarrow (G'',T'',v'',\alpha'',\sigma'',\tau'')$, we declare $\varphi \prec \psi$ if:
\begin{itemize}
\item $(G',T',v',\alpha',\sigma',\tau') \prec_O (G'',T'',v'',\alpha'',\sigma'',\tau'')$ or they are equal, and
\item $\varphi^{\op}(w) < \psi^{\op}(w)$ in the depth-first order, where $w$ is the smallest (in the depth-first order) vertex of $T' = T''$ for which $\varphi^{\op}(w) \neq \psi^{\op}(w)$.
\end{itemize}

To see that $\prec$ respects composition, note that for composition to make sense the two morphisms $\varphi$ and $\psi$ must have equal codomain. Therefore, assuming that $\varphi \prec \psi$, it must be the case that the second condition of the above has been met. This second condition will remain true following composition with any morphisms, as our morphisms preserve the depth-first order by definition.
\end{proof}

The property (G2) is considerably more delicate, and relies heavily on the results of \cite{PR-genus}.

\begin{proposition}
The category $P\cGgop$ satisfies property (G2).
\end{proposition}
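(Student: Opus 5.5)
We reduce (G2) for $P\cGgop$ to the unlabeled statement proved in \cite{PR-genus}, upgraded to labels by a Higman-type argument. Fix a rigidified edge-labeled graph $X=(G,T,v,\alpha,\sigma,\tau)$. A morphism out of $X$ in $P\cGgop$ is a contraction into $X$ in $P\cGg$, i.e.\ a rigidified edge-labeled graph $Y$ together with a contraction $Y\to X$. Its isomorphism class is determined by $Y$ and the map. The factor-through order $\le$ says $\varphi\le\psi$ exactly when the contraction $\psi$ factors through $\varphi$, i.e.\ the source of $\varphi$ is a further contraction of the source of $\psi$ compatibly with the maps to $X$. Descending chains are ruled out easily. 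Each proper factorization strictly decreases the number of vertices of the source, since $P\cGg$ has no nontrivial endomorphisms. That number is bounded below by $|V_G|$. So the real content is the absence of infinite antichains, i.e.\ that $(\mathcal{H}_X,\le)$ is a well partial order.

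The key step is to encode each morphism $Y\to X$ as a finite combinatorial word and to identify $\le$ with an embedding order known to be a wqo. In \cite{PR-genus}, the unlabeled rigidified genus-$g$ graphs mapping onto a fixed $X$ are encoded as follows. The spanning tree $T_Y$ is a planar rooted tree, and each vertex is decorated by finite data: which vertex of $X$ it maps to, and which of the $g$ extra edges (in $\tau$ order) are incident to it. The contraction order then corresponds to a Kruskal/Higman-style topological minor order on these decorated planar rooted trees with a fixed finite decoration alphabet. We apply the same encoding and add the finitely many labels of $S$. Each tree edge of $Y$ not contracted in $Y\to X$ corresponds to an edge of $T$ and so has a forced label. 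Contracted edges carry a label in $S$ that is forgotten under contraction. Extra edges have labels forced by $X$ through $\tau$. We therefore enlarge the vertex decoration alphabet by recording the label of the tree edge into each vertex, an element of the finite set $S\sqcup\{\text{root}\}$. The alphabet stays finite, a finite product of finite sets, hence wqo under equality.

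We then check that $\varphi\le\psi$ in the labeled setting holds exactly when the decorated tree of $\psi$ dominates that of $\varphi$ in the labeled version of the order used in \cite{PR-genus}. We verify that a contraction between sources over $X$ preserves the labels of surviving edges, which is condition 3 of Definition~\ref{def:contraction}, and erases only contracted edges, whose labels impose no constraint. Given that, the wqo property follows from the labeled form of Kruskal's tree theorem, or equivalently from the argument of \cite[Theorem 1.1]{PR-genus} run with a finite label set adjoined, since every wqo input there is stable under products with finite sets. The main obstacle is this compatibility check. Contractions must collapse a whole subtree to a vertex while preserving the depth-first order and $\tau$, so the induced map on encodings is not a plain subword or subtree embedding. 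One has to confirm that the \cite{PR-genus} reduction (to a Higman order on words built from depth-first traversals) still works when each letter carries an edge label. I would first attempt it by inspecting their proof step by step and replacing their letter alphabet $A$ by $A\times(S\sqcup\{\text{root}\})$. Everywhere their proof uses only that $A$ is finite or wqo under equality, that use survives the replacement.
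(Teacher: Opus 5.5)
Your proposal is correct and follows essentially the same route as the paper: you invoke the unlabeled argument of \cite[Corollary 3.9]{PR-genus} and enlarge its finite vertex-decoration alphabet by the label of the tree edge entering each vertex, so the labeled planar Kruskal theorem still applies because $S$ is finite. Your observation that extra-edge labels are already forced by $\tau$ and $X$ is a harmless simplification (the paper records them anyway), and you only need the implication ``the decorated tree of $\varphi$ embeds in that of $\psi$ $\Rightarrow$ $\varphi\le\psi$'', not the equivalence you assert.
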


\begin{proof}
In \cite[Corollary 3.9]{PR-genus} it is proven that the category of rigidified genus $g$ graphs, without any edge-labels, satisfies (G2). That proof functions on the back of the labeled planar Kruskal's tree theorem (see \cite[Theorem 9]{Barter} for the unlabeled version and \cite[Theorem 3.6]{PR-genus} for the labeled version). The category we are working with here differs from the category of that work by the presence of a finite amount of extra data (namely the set $S$). This data can be incorporated into the argument of \cite[Corollary 3.9]{PR-genus} in the following way. When they associate to each morphism a vertex-labeled planar rooted tree, instead of the portion of the label indicating the presence of an extra edge being a $1$ or $0$, we use the edge-label of that edge or $0$. Moreover, we also add a label to every vertex indicating the label of the edge of the tree going into it. Because the edge-label set was chosen to be finite, this extra data is still permissible under the assumptions of the planar Kruskal's tree theorem. In particular, the argument of \cite[Corollary 3.9]{PR-genus} goes through without issue, giving us what we need.
\end{proof}

We finish this section with the proof of Theorem~\ref{noetherian}.

\begin{proof}[Proof of Theorem \ref{noetherian}]
We have thus-far proven that $P\cGgop$ is a directed (i.e. without non-trivial endomorphisms) category which satisfies properties (G1) and (G2). In the language of \cite{sam}, this implies that $P\cGgop$ is a Gr\"obner category;  therefore, modules over $P\cGgop$ satisfy the Noetherian property. Lemma \ref{PF} now implies our result.
\end{proof}

\section{Contraction in generalized splines}\label{SplineAsModule}

The goal of this section is to apply the machinery of Theorem~\ref{noetherian} to the setting of generalized splines: this will lead to the proof of Theorem~\ref{splinefg}, which appears at the end of this section.

\begin{definition}
Fix a commutative Noetherian ring $R$ with unity, and let $(G,\alpha)$ be an edge-labeled graph. The \textbf{module of generalized splines} over $(G,\alpha)$ is the $R$-submodule of $\bigoplus_{v \in V_G}R$ defined by
\[
\Spl_R(G,\alpha) = \left\{{\bp} \in \bigoplus_{v \in V_G}R \ \middle\vert\ {\bp}_u - {\bp}_v \in \alpha(e) \text{ for all edges $e\in E_G$ with $\partial(e) = \{u,v\}$}\right\}.
\]
\end{definition}

We now point out how the module of generalized splines behaves when contracting edges. It was observed in \cite[Lemma 23 and Corollary 27]{AMT} and reiterated in Section~\ref{sec:gensplinesintro} that if $\varphi\colon (G,\alpha) \rightarrow (G',\alpha')$ is a contraction of edge-labeled graphs, then one obtains a ``vertex expansion'' map
\[
\varphi^{\ast}\colon \Spl_R(G',\alpha') \rightarrow \Spl_R(G,\alpha)
\]
by setting
\begin{align}
\varphi^{\ast}(({\bf{p}}_v)_{v \in V_{G'}}) = ({\bf{p}}_{\varphi(w)})_{w \in V_G}. \label{contmap}
\end{align}

In particular, this turns the assignment
$(G,\alpha) \mapsto \Spl_R(G,\alpha)$
into a $\cGgop$-module over $R$ that we denote by $\Spl_R$. (See Figure~\ref{vertexexpansion} for an example of the induced map on spline modules stemming from the contraction in Figure \ref{contractionexample}.) Our next result proves that the module $\Spl_R$ is finitely generated.

\begin{figure}
\centering
\tikz[remember
picture]{\node(1BL){\includegraphics[]{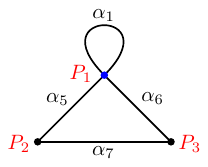}};}%
\hspace*{3cm}%
\tikz[remember picture]{\node(1BR){\includegraphics[]{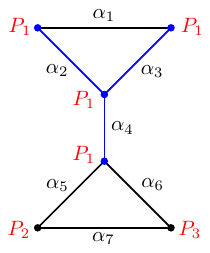}};}

\caption{An example of the map on splines induced by the contraction of Figure \ref{contractionexample}.}

\label{vertexexpansion}

\end{figure}
\tikz[overlay,remember picture]{\draw[-latex,thick] (1BL) -- (1BL-|1BR.west)
node[midway,below,text width=2.5cm]{};} 

\begin{proof}[Proof of Theorem \ref{splinefg}]
Define a $\cGgop$-module $M$ by associating to any edge-labeled graph $(G,\alpha)$ of genus $g$, with edge-labels in $S$, the $R$-module $M(G,\alpha):=\bigoplus_{v \in V_G} R$ and to any contraction $\varphi\colon (G,\alpha) \rightarrow (G',\alpha')$ the induced map as in \eqref{contmap}. Since $\Spl_R$ is a submodule of $M$, it will suffice by Theorem~\ref{noetherian} to show that $M$ is finitely generated.

Let $(G,\alpha)$ be an edge-labeled graph with at least three vertices, and write ${\mathbf{1}^u}\in \bigoplus_{v \in V_G} R$ for the indicator vector defined by 
\[
({\mathbf{1}^u})_v = \begin{cases}1 & \text{if $v = u$}\\ 0 & \text{if $v \neq u$}.\end{cases}
\]
We will find ${\mathbf{1}}^{u}$ in the span of images of maps induced by contractions coming from graphs with strictly fewer vertices. Assume first that we may find a non-loop edge $e$ for which $u \notin \partial(e)$. Let $\varphi$ be a contraction from $(G,\alpha)$ whose only contracted edge is $e$. Then it is clear that ${\mathbf{1}^u}$ is in the image of the map $\varphi^\ast$. On the other hand, if $G$ has a vertex $u$ that is adjacent to every non-loop edge, it must be the case that $G$ is a star graph (perhaps with some loops on vertices and some multi-edges). In this case, let $v$ be any vertex adjacent to $u$. The vector ${\mathbf{1}^v}$ can be found in the image of the map induced by the one that contracts all non-loop edges of $G$ that are not connected to $v$.   On the other hand, the vector ${\mathbf{1}^u} + {\mathbf{1}^v}$ can be found in the image of the map induced by the one that only contracts one of the edges connecting $u$ to $v$. In all cases, it therefore follows that for any $G$ with at least three vertices, each of the vectors ${\mathbf{1}^u}$ will appear in the span of the images of maps induced by contractions coming from graphs with strictly fewer vertices. Because the ${\mathbf{1}^u}$ generate $M(G,\alpha) = \bigoplus_{v \in V_G} R$ as an $R$-module, and because there are only a finite number of edge-labeled graphs with at most two vertices, we conclude that $M$ is generated by the edge-labeled graphs with at most two vertices, as desired.
\end{proof}

\begin{remark}\label{flow-up}
Theorem \ref{splinefg} implies that the $P\cGgop$-module $\Spl_R$ is also finitely generated. The extra data imposed on a rigidified edge-labeled graph imposes a total ordering on the vertices via a depth-first ordering from the root of the chosen spanning tree. Moreover, the morphisms in the category are designed to preserve this order on the vertices. By consequence, if one starts with a \textbf{flow-up spline}, i.e. a spline such that there exists a vertex $v$ where all vertices above $v$ are assigned $0$ by the spline, then the vertex expansion will still be flow-up. This implies that if one finds a generating set of flow-up splines for each of the modules of splines associated to the generating graphs of $\Spl_R$, then one can find flow-up generating sets for all modules of splines that are uniform in some sense. Unfortunately, this will not allow one to conclude that generalized splines always have a \textbf{flow-up basis}, i.e. a minimum generating set of flow-up splines, as for general rings one cannot always find a basis within a generating set.   In some cases, however, positive results on the existence of flow-up bases are known: for a couple of examples in this direction, see e.g. \cite[Theorem 6.14]{D25} for the existence of flow-up bases for splines over $\Z$ and \cite[Section 5]{BT} for the existence of flow-up bases for splines on cycles over $\Z/n\Z$.
\end{remark}

\section{Application: Splines over trees} \label{section: algebraicity of trees}

In the various subsections that follow, we will study splines on trees, and each section will be devoted to a different coefficient ring.

\subsection{Application: classical splines over trees}

The classical theory of splines involves the study of piecewise polynomial functions  on a combinatorially-defined partition of a geometric object (usually a polyhedral complex), whose polynomial pieces agree up to some degree of differentiability on the intersection of the top-dimensional pieces of the partition.  Due to a result of Billera \cite[Theorem 2.4]{B88}, this classical theory of splines may be studied using the theory of generalized splines over a certain quotient of a polynomial ring.

\begin{definition}\label{def:classicalSplines}
Let $k$ be a field, and write $R_d^n = k[x_1,\ldots,x_n] / \mathfrak{m}^d$ for the polynomial ring over $k$ modulo the $d$-th power of the ideal $\mathfrak{m}$  generated by the variables $x_i$. For any edge-labeled graph $(G,\alpha)$, we will refer to the module of splines $\Spl_{R_d^n}(G,\alpha)$ over $R_d^n$ as a \textbf{module of classical splines}.
\end{definition}

For more about the relationship between $\Spl_{R^n_d}$ and the classical theory of splines, we point to e.g. \cite[Sections 5 and 6]{AMT} for a detailed exposition.  Note that $R_d^n$ is a finite-dimensional $k$-algebra, and therefore any module of classical splines is a finite-dimensional $k$-vector space. It is of great and classical interest to understand the dimensions of the modules of classical splines as a $k$-vector space, see e.g. \cite{S74,S79,AS87,AS90,D91,SS02,LS07,SSY20}.  And, similar problems about the size of various modules of generalized splines have been studied in the literature, see e.g. \cite{BR91,GZ01,GZ03,BHKR,GTV,ACFMT,AS21a,AS21b}.

The main theorem of this section (Theorem~\ref{thm:treealgebraic} below) provides results in this direction for the specific case of classical splines over trees. We point out that a presentation for modules of splines over trees is given in \cite{GTV}: this presentation is useful for advancing the theory, but is difficult to use for answering questions about dimensions of the spline modules.  

Our application to classical splines over trees will naturally involve $P\mathcal{G}_{0,S}^{\op}$-modules.  Thus, we first begin with the following general lemma.

\begin{lemma}\label{lem:restrictscalars}
Let $k$ be a field, and let $R$ be any finite-dimensional $k$-algebra. Then for any finite set $S$ of ideals of $R$, the restriction of scalars functor from $P\mathcal{G}_{0,S}^{\op}$-modules over $R$ to $P\mathcal{G}_{0,S}^{\op}$-modules over $k$ preserves finite generation.
\end{lemma}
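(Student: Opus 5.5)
The plan is to transfer generators along a finite $k$-basis of $R$. Let $M$ be a finitely generated $P\mathcal{G}_{0,S}^{\op}$-module over $R$, with generators the rigidified edge-labeled trees $G_1,\dots,G_n$. By Definition~\ref{def:modulefggens}, each $M(G)$ is a finitely generated $R$-module. There are two things to check for the restricted module $\operatorname{Res}M$, which has the same objects and maps but is viewed over $k$. First, each $M(G)$ must be a finitely generated $k$-module, i.e.\ finite-dimensional, so that $\operatorname{Res}M$ is a module over $k$ at all. Second, $\operatorname{Res}M$ must be finitely generated as a functor.

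The first point is immediate. If $m_1,\dots,m_r$ generate $M(G)$ over $R$ and $b_1,\dots,b_s$ is a $k$-basis of $R$, then the products $b_j m_i$ span $M(G)$ over $k$, so $\dim_k M(G)\le rs<\infty$.

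For the second point, I claim that the same objects $G_1,\dots,G_n$ generate $\operatorname{Res}M$. Fix an object $G$ and an element $x\in M(G)$. By hypothesis we can write
\[
x=\sum_t r_t\,\varphi_t^{*}(y_t), \qquad r_t\in R,\ y_t\in M(G_{i_t}),
\]
where the $\varphi_t\colon G\to G_{i_t}$ are morphisms. Each induced map $\varphi_t^{*}$ is $R$-linear, so $r_t\varphi_t^{*}(y_t)=\varphi_t^{*}(r_t y_t)$. Since $r_ty_t$ again lies in $M(G_{i_t})$, this expresses $x$ as a sum of images of elements of the $M(G_i)$, with coefficients $1\in k$. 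Hence the $k$-span of the images $\varphi^{*}(M(G_i))$ is all of $M(G)$, and $\operatorname{Res}M$ is finitely generated by $G_1,\dots,G_n$.

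The only step needing care is the first one: the target category in Definition~\ref{def:modulefggens} requires finitely generated modules, and this is exactly where finite-dimensionality of $R$ over $k$ is used. The second step is formal $R$-linearity. I do not expect a genuine obstacle here, and in particular neither Noetherianity nor the genus-$0$ hypothesis is needed. The lemma is presumably stated for $P\mathcal{G}_{0,S}^{\op}$ only because that is the setting of the algebraicity application, where one wants finitely generated modules over a field $k$ in order to invoke the generating function results of \cite{RamTree}.
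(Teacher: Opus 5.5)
Your proof is correct and uses the same idea as the paper's one-line argument. The paper turns an $R$-generating set into a $k$-generating set by adjoining $R$-translates of the generators (implicitly, translates by a $k$-basis of $R$, which is where finite-dimensionality enters); this is your identity $r_t\varphi_t^*(y_t)=\varphi_t^*(r_ty_t)$ combined with your bound $\dim_k M(G)\le rs$. Under the paper's object-based Definition~\ref{def:modulefggens}, your version is somewhat cleaner: it shows explicitly that the same generating objects still work, and that finite-dimensionality is needed only so that $\operatorname{Res}M$ takes values in finitely generated $k$-modules.
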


\begin{proof}
This follows immediately from the fact that $R$ is finite dimensional over $k$. Any generating set over $R$ can be turned into a generating set over $k$ by adding in all $R$-translates of the generators. 
\end{proof}

This lemma allows us to use facts about dimension growth of $P\mathcal{G}_{0,S}^{\op}$-modules over a field to conclude similar facts in the context of classical splines. It remains to study the dimension growth of $P\mathcal{G}_{0,S}^{\op}$-modules over $k$.

\begin{definition}\label{words}
Let $S$ be any finite set. The \textbf{Dyck language} $\mathcal{D}(S)$ over $S$ is the language on the alphabet
\[
\{\, (_\alpha\, ,\, )_\alpha \mid \alpha \in S\},
\]
where nonempty words consist of a sequence of properly nested parentheses labeled by elements of $S$. By properly nested, we mean that any left parenthesis must be later closed-off by a right parenthesis with the same label.  For example, if $S = \{0,1\}$, then $(_1)_1$ and $(_1(_0)_0)_1$ are words in $\mathcal{D}(S)$, while $(_1)_0$ is not.

Next, let $S$ be a finite set of ideals of some commutative ring $R$, and consider the Dyck language $\mathcal{D}(S)$. For any word $w \in \mathcal{D}(S)$, we can define a planar rooted tree with edge-labels in $S$ as follows. (See Figure \ref{dycktree} for an example.) By definition, we set the tree associated to the empty word to be the single-noded planar rooted tree. For any nonempty word $w \in \mathcal{D}(S)$, we may find (possibly empty, possibly repeating) elements $w_1,w_2,w_3,\ldots, w_n \in \mathcal{D}(S)$ and $\alpha_1,\ldots,\alpha_n \in S$ such that  
\[
w = (_{\alpha_1} w_1 )_{\alpha_1} (_{\alpha_2} w_2 )_{\alpha_2} \cdots (_{\alpha_n} w_n )_{\alpha_n}.
\] 

Then the associated edge-labeled tree $T(w)$ is defined by having $n$ branches off its root, with edge-labels $\alpha_1,\ldots,\alpha_n$, respectively, where for each $i$ the $i$-th branch from the left is attached to the tree defined by the word $w_i$. It is easily checked that this defines a bijection between $\mathcal{D}(S)$ and the set of planar rooted trees with edge-labels in $S$. 

\begin{figure}
    \centering
    \includegraphics{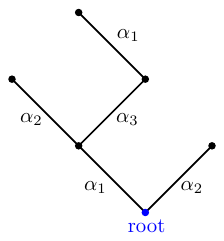}
    \caption{The labeled planar rooted tree associated to the word $(_{\alpha_1}(_{\alpha_2})_{\alpha_2}(_{\alpha_3}(_{\alpha_1})_{\alpha_1})_{\alpha_3})_{\alpha_1}(_{\alpha_2})_{\alpha_2}$}
    \label{dycktree}
\end{figure}

For any $P\mathcal{G}_{0,S}^{\op}$-module $M$ over a field $k$, we define the \textbf{Hilbert--Dyck series} as the formal power series
\[
\HD_M(t) := \sum_{w \in \mathcal{D}(S)}\dim_k M(T(w)) t^{|E_{T(w)}|},
\]
where $|E_{T(w)}|$ is the number of edges in $T(w)$, or equivalently, half the length of the word $w$.  
\end{definition}

For example, if $M$ is the module that assigns $k$ to every tree and the identity to every morphism, and if $S = \{(0)\}$, then $\HD_M(t)$ is the generating function for the Catalan numbers. It is well-known that this generating function is \textbf{algebraic}, in that there exists a polynomial $P(x,t)$ such that $P(\HD_M(t),t) = 0$. Algebraic power series are critical objects in analytic combinatorics (see \cite{BD} for a survey on algebraic generating functions and their properties) and impose a number of strong restrictions on the growth of their coefficients. For instance, for a sequence of integers $\{a_n\}$ whose generating function $\sum_{n}a_nt^n$ is algebraic, there exist constants $C,\rho \in \R$ and a rational number $\alpha$ such that $a_n$ is asymptotically equal to $Cn^\alpha\rho^n$,
\[
a_n \sim C n^\alpha\rho^{n}.
\]

\begin{theorem} \label{algebraic}
Let $M$ be a finitely generated $P\mathcal{G}_{0,S}^{\op}$-module over a field $k$. Then the Hilbert--Dyck series $\HD_M(t)$ is algebraic.
\end{theorem}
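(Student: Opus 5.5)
We follow the strategy of \cite{RamTree}, which proves the unlabeled version for rooted trees with contractions. Since $P\mathcal{G}_{0,S}$ is Gr\"obner (Proposition~\ref{prop:G1} and the (G2) proposition above, with $g=0$), the Hilbert--Dyck series is additive in short exact sequences of finitely generated modules. By Sam--Snowden theory, every finitely generated $P\mathcal{G}_{0,S}^{\op}$-module over a field admits a filtration whose associated graded pieces are ``monomial'' (Gr\"obner-degenerate) modules. So it suffices to treat the case where $M$ is a quotient of a finite sum of principal projectives $P_{T_i}=k[\operatorname{Hom}(-,T_i)]$ by a monomial submodule. Such a submodule is spanned by the morphisms lying in an upward-closed set of the divisibility poset. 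Concretely, $\dim_k M(T)$ then counts morphisms $T\to T_i$ that avoid a poset ideal, and by (G2) that ideal is finitely generated.

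The key step is to translate this count into a language-theoretic statement about Dyck words. A morphism $\varphi\colon T\to T_i$ in $P\mathcal{G}_{0,S}$ is a depth-first-order-preserving contraction of a labeled planar rooted tree onto $T_i$. We encode the pair $(T,\varphi)$ as the Dyck word $w$ of $T$ with an extra finite decoration on each letter: which vertex or edge of $T_i$ the corresponding edge maps to. Because $T_i$ is fixed and $S$ is finite, this decoration ranges over a finite set. We must then check two things.
\begin{enumerate}
\item The set $L_i$ of valid decorated words, i.e.\ those encoding genuine contractions onto $T_i$, is a context-free language. Its validity conditions are local nesting conditions plus a finite-state check on the shape of $T_i$, which a pushdown automaton can verify.
\item For a fixed generator $\psi\colon T_i\to T'$, the condition ``$\varphi$ factors through $\psi$'' is likewise context-free. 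Factoring through $\psi$ amounts to the decorations being compatible with a further contraction of $T_i$.
\end{enumerate}
The words encoding morphisms that avoid the ideal then form the complement, inside $L_i$, of a finite union of such languages.

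This complementation is the main obstacle: context-free languages are not closed under complement. To handle it we would follow \cite{RamTree} and work with \emph{unambiguous} grammars or tree automata. Decorated planar trees avoiding finitely many ``minor'' patterns (in the contraction order) form a \emph{regular tree language}, because contraction-minor containment of a fixed finite pattern can be detected by a finite bottom-up tree automaton. Regular tree languages are closed under complement and intersection. Their yields, when generated by a deterministic bottom-up tree automaton, give unambiguous context-free grammars, and the Chomsky--Sch\"utzenberger theorem says that the generating function of an unambiguous context-free language is algebraic. One subtlety remains: weighting by $t^{|E_T|}$ is the same as half the word length, so the weighting causes no difficulty. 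The other subtlety is that the projective functor counts pairs $(T,\varphi)$ rather than trees. This is exactly what we want, since $\dim_k M(T)$ is the number of surviving morphisms.

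Finally, we assemble the pieces. Algebraic series form a ring, so they are closed under sums and differences. Hence the Hilbert--Dyck series of each monomial piece is algebraic, and additivity along the filtration gives that $\HD_M(t)$ is algebraic.
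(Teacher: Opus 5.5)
Your outline is correct and is essentially the paper's argument. The paper only sketches the proof by deferring to \cite{RamTree}, which runs the Sam--Snowden Gr\"obner/formal-language method: reduce to monomial quotients of principal projectives, then show the surviving morphisms, encoded as words, form unambiguous context-free languages via explicit pushdown automata, with the finite label set $S$ merely enlarging the alphabet. Your use of deterministic tree automata in place of those pushdown automata is just a convenient way to obtain the needed closure under union and complement.

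One slip to fix: in your item 2 you write the generator as $\psi\colon T_i\to T'$, which has the direction backwards. The generators of the ideal are contractions $\psi\colon T'\to T_i$ \emph{onto} $T_i$, and $\varphi\colon T\to T_i$ lies in the ideal iff $\varphi=\psi\circ h$ for some contraction $h\colon T\to T'$. This is a decorated minor-containment condition, not a further contraction of $T_i$, and it is in fact the condition your tree-automaton paragraph uses.
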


\begin{proof}[Sketch of proof]
    In \cite{RamTree}, the second author extended the classical result on the algebraicity of the generating function for the Catalan numbers to say that the Hilbert--Dyck series of any finitely-generated $P\mathcal{G}_{0}^{\op}$-module over a field must be algebraic.  The proof of that theorem used a connection between the representation theory of categories and language theory that was originally observed in \cite{sam}; in particular, explicit push-down automata were constructed whose associated languages have generating functions that agree with the above Hilbert--Dyck series. In a similar fashion to the proof of Theorem \ref{noetherian}, the main theorem of \cite{RamTree} can be altered to accommodate the finite bit of extra data coming from $S$.
\end{proof}

Via Lemma~\ref{lem:restrictscalars} from the beginning of this section, we can now conclude the following about the dimensions of classical splines.

\begin{theorem} \label{thm:treealgebraic}
Let $k$ be a field, and let $S$ denote any finite set of ideals of $R_d^n$. The Hilbert--Dyck series
\[
\HD_d(t) := \sum_{w \in \mathcal{D}(S)} \dim_k(\Spl_{R_d^n}(T(w)))t^{|E_{T(w)}|}
\]
is algebraic.  
\end{theorem}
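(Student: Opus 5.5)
The plan is to reduce Theorem~\ref{thm:treealgebraic} to Theorem~\ref{algebraic}. For this we need a finitely generated $P\mathcal{G}_{0,S}^{\op}$-module over $k$ whose value on each planar edge-labeled tree $T(w)$ is $\Spl_{R_d^n}(T(w))$. The ring $R_d^n = k[x_1,\ldots,x_n]/\mathfrak{m}^d$ is a quotient of a Noetherian ring, so it is a commutative Noetherian ring with unit. It is also finite-dimensional over $k$, since it is spanned by the monomials of degree less than $d$. The set $S$ is a finite set of ideals of $R_d^n$.

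First, apply Theorem~\ref{splinefg} with $R = R_d^n$ and $g=0$. This shows that the $\mathcal{G}_{0,S}^{\op}$-module $\Spl_{R_d^n}$ is finitely generated. Remark~\ref{flow-up} (equivalently, Property (F) of the forgetful functor from Lemma~\ref{PF}) then gives that its pullback along the forgetful functor $P\mathcal{G}_{0,S} \to \mathcal{G}_{0,S}$ is a finitely generated $P\mathcal{G}_{0,S}^{\op}$-module over $R_d^n$. Concretely, this pullback assigns $\Spl_{R_d^n}(G,\alpha)$ to a rigidified tree $(G,T,v,\alpha,\sigma,\tau)$; here $T = G$ and $\tau$ is empty. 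Alternatively, one can argue directly: the ambient module $\bigoplus_v R$ is finitely generated on $P\mathcal{G}_{0,S}^{\op}$ by the same contraction argument as in the proof of Theorem~\ref{splinefg}, and Noetherianity of $P\mathcal{G}_{0,S}^{\op}$ (established in the proof of Theorem~\ref{noetherian}) then gives finite generation of the spline submodule.

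Second, apply Lemma~\ref{lem:restrictscalars} to restrict scalars from $R_d^n$ to $k$. This yields a finitely generated $P\mathcal{G}_{0,S}^{\op}$-module $M$ over $k$ with $\dim_k M(G) = \dim_k \Spl_{R_d^n}(G)$. Each value is finite-dimensional, being a submodule of $(R_d^n)^{|V_G|}$.

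Third, identify the objects. A rigidified genus-$0$ edge-labeled graph is exactly a rooted tree with an ordering of the children at each vertex, that is, a planar rooted tree with edge-labels in $S$. By the bijection in Definition~\ref{words}, these are exactly the trees $T(w)$ for $w\in\mathcal{D}(S)$. Hence $\HD_M(t)=\HD_d(t)$, and Theorem~\ref{algebraic} gives algebraicity.

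The only delicate point is the first step. Finite generation is transferred from $\mathcal{G}_{0,S}^{\op}$ to $P\mathcal{G}_{0,S}^{\op}$, but Lemma~\ref{PF} is stated in the opposite direction. I would handle this by noting that every contraction of trees lifts to a morphism of rigidified trees once suitable planar structures are chosen on the domain and codomain. Since only finitely many planar structures exist on each generator, the generating set stays finite. A cleaner alternative is the direct argument via the ambient module and Noetherianity, which avoids this lifting altogether.
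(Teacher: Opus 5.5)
Your proposal is correct and is essentially the paper's argument. You get finite generation of $\Spl_{R_d^n}$ from Theorem~\ref{splinefg}, transfer it to $P\mathcal{G}_{0,S}^{\op}$ via Property (F) as in Remark~\ref{flow-up}, restrict scalars by Lemma~\ref{lem:restrictscalars}, and apply Theorem~\ref{algebraic} after identifying rigidified genus-$0$ graphs with the trees $T(w)$. One tightening in your lifting remark: the rigidified structure on the domain is given, not chosen, so what is needed is that the codomain can always be given the induced root and planar ordering — exactly the Property (F) statement the paper invokes.
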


\subsection{Application: splines over trees for finite rings of prime power order}

We now explore the machinery of this paper when the ring $R$ is finite. The benefit in this setting is that the set of all non-zero ideals in $R$ is finite, and therefore there is no limitation for the set of ideals used as edge-labels. 

To this point in the literature, splines over finite rings have been limited to the most natural setting of $R = \Z/n\Z$. Much of the work done on generalized splines over $\mathbb{Z}/n\mathbb{Z}$ has been focused on computing explicit bases \cite{BHKR,BT}: this was ultimately accomplished in full generality in \cite{PSTW}. Rather than rehash these well-studied situations, in this work we instead  focus our attention on the class of finite rings whose order is a prime power. Rings of prime power order play an important role in number theory, and their enumeration has been a problem of interest for decades \cite{KP,Poonen,BM}. The current best asymptotic estimates imply that the number of rings of prime power order $p^n$ is exponential in $\frac{2}{27}n^3 + O(n^{\frac{5}{2}})$ \cite{BM}.

\begin{definition}
Let $M$ be an $R$-module for a finite ring $R$.  A \textbf{basis} of $M$ is a generating set for $M$ consisting of the fewest number of elements.  A \textbf{$\mathbb{Z}$-basis} of $M$ is a set of elements of the smallest possible size that generates $M$ when viewed as a $\mathbb{Z}$-module (in the natural way, via the additive structure of $R$).
\end{definition}

\begin{theorem} \label{primePower}
Let $R$ be a finite ring whose order is a power of some prime number $p$, and let $S$ denote its set of ideals. Then the generating function
\[
\HD(t) := \sum_{w \in \mathcal{D}(S)} \rank_{\mathbb{Z}}(\Spl_R(T(w)))t^{|E_{T(w)}|}
\]
is algebraic, where $\rank_\mathbb{Z}(\Spl_R(T(w)))$ denotes the size of a $\mathbb{Z}$-basis for $\Spl_R(T(w))$.
\end{theorem}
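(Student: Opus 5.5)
The plan is to turn the $\mathbb{Z}$-ranks into dimensions of vector spaces over $\mathbb{F}_p$ and then apply Theorem~\ref{algebraic}. Since $|R|$ is a power of $p$, every $\Spl_R(T(w))$ is a finite abelian $p$-group. For such a group $A$, the structure theorem together with Nakayama's lemma gives that the minimal number of generators as a $\mathbb{Z}$-module is $\dim_{\mathbb{F}_p} A/pA$. So
\[
\rank_{\mathbb{Z}}(\Spl_R(T(w))) = \dim_{\mathbb{F}_p}\bigl(\Spl_R(T(w))/p\,\Spl_R(T(w))\bigr),
\]
and it suffices to show that $N := \Spl_R/p\,\Spl_R$ is a finitely generated $P\mathcal{G}_{0,S}^{\op}$-module over $\mathbb{F}_p$.

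First I check that $N$ really is such a module. The ring $R$ is finite, hence Noetherian, and its set $S$ of ideals is finite. So Theorem~\ref{splinefg} with $g=0$ says that $\Spl_R$ is a finitely generated $\mathcal{G}_{0,S}^{\op}$-module. As noted in Remark~\ref{flow-up}, its restriction to $P\mathcal{G}_{0,S}^{\op}$ is also finitely generated. The maps $\varphi^\ast$ are $R$-linear, so they send $p\,\Spl_R(G')$ into $p\,\Spl_R(G)$. Hence $p\,\Spl_R$ is a submodule and the pointwise quotient $N$ is a $P\mathcal{G}_{0,S}^{\op}$-module. It is finitely generated, since the images of the generators of $\Spl_R$ generate $N$ at every object.

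Next I pass from $R$ to $\mathbb{F}_p$. Each $N(T)$ is a module over $\bar R := R/pR$, which is a finite $\mathbb{F}_p$-algebra. The induced maps are $\bar R$-linear, so $N$ is a finitely generated $P\mathcal{G}_{0,S}^{\op}$-module over $\bar R$. Here the elements of $S$ serve only as a finite labeling set for the category, so they need not be ideals of $\bar R$; alternatively one can relabel by their images in $\bar R$. Lemma~\ref{lem:restrictscalars} then shows that $N$ is finitely generated over $\mathbb{F}_p$. Its proof only uses that $\bar R$ is finite-dimensional: adjoin all $\bar R$-translates of the finitely many generators.

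Finally, Theorem~\ref{algebraic} applied to $N$ over $k=\mathbb{F}_p$ shows that $\HD_N(t)$ is algebraic, and by the first paragraph $\HD_N(t)=\HD(t)$. The only step that needs real care is the rank identification. One must make sure that a ``$\mathbb{Z}$-basis'' in the sense of the definition means a minimal-size $\mathbb{Z}$-generating set, which is what the structure theorem for finite abelian $p$-groups computes. One must also make sure the quotient construction is compatible with the labeling conventions, so that Lemma~\ref{lem:restrictscalars} and Theorem~\ref{algebraic} apply verbatim. Everything else is formal.
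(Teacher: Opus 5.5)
Your proposal is correct and follows essentially the same route as the paper: the paper also identifies $\rank_{\mathbb{Z}}$ with the $\mathbb{F}_p$-dimension of $\Spl_R(T(w))\otimes_{\mathbb{Z}}\mathbb{Z}/p\mathbb{Z}$ via the structure theorem, notes that the resulting $P\mathcal{G}_{0,S}^{\op}$-module is still finitely generated, and applies Theorem~\ref{algebraic}. Your explicit passage through $R/pR$ and Lemma~\ref{lem:restrictscalars} (with $S$ used only as a finite label set) just spells out the step the paper compresses into ``extension of scalars preserves finite generation.''
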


\begin{proof}
Since $R$ is a finite ring of prime power order, the structure theorem for finite abelian groups asserts that as an additive group $\Spl_R(T(w))$ must have the decomposition
\[
\Spl_R(T(w)) \cong \bigoplus_{n \geq 1} m_{n,w} \mathbb{Z}/p^n\mathbb{Z},
\]
where $m_{n,w} = 0$ for all $n \gg 0$ and all words $w \in \mathcal{D}(S)$. It follows that
\[
 \rank_{\mathbb{Z}}(\Spl_R(T(w))) = \sum_{n \geq 1} m_{n,w}.
 \]
 On the other hand, the vector space $\Spl_R(T(w)) \otimes_\mathbb{Z} \mathbb{Z}/p\mathbb{Z}$ can easily be seen to have dimension $\sum_{n \geq 1} m_{n,w}$. Our claim now follows from Theorem \ref{algebraic}, as well as the fact that extension of scalars preserves finite generation of the $P\mathcal{G}_{0,S}^{\op}$-module $\Spl_R$.
 \end{proof}
 
 \begin{remark}
 Theorem \ref{primePower} provides a partial answer to \cite[Question 6.3]{BT}. That question asks whether one can determine the distributions of integral ranks of modules of splines over prime power rings by varying the edge labels. Our result implies that these ranks fit into an algebraic generating function, which implies the shape of their growth must satisfy a very particular form. 
 \end{remark}

 \begin{remark}
If $R$ is a finite ring that is not of prime power order, then as a $\mathbb{Z}$-module  the module of splines can be decomposed into a sum $\bigoplus_i \mathbb{Z}/d_i\mathbb{Z}$, where $d_i \mid d_{i+1}$. In particular, it follows from the Chinese remainder theorem that the $\mathbb{Z}$-rank of this module is related to the total number of prime powers that appear for each prime. To be more precise, if we write $n_p$ for the total number of powers of $p$ that appear as maximal $p$ divisors of the $d_i$, then the $\mathbb{Z}$-rank of the module of splines is precisely given by $\max_{p \text{ prime}}n_p$. While the above techniques show that the generating function for each $n_p$ is algebraic, this does not necessarily translate into saying that the maximum is as well.
\end{remark}

\subsection{Application: splines over trees for $\Z/p^n\Z$} \label{sec:smallComputations}

We now specialize to splines of trees over $R = \Z/p^n\Z$ for some fixed prime $p$.  For the cases when $n = 1,2$, we explicitly compute the Hilbert--Dyck series and therefore concretely illustrate the algebraicity results of the previous sections.

\begin{example}
 We first consider generalized splines of trees over the field $R=\Z/p\Z$. Since $R$ is a field, we take $S = \{0,R\}$ to be our set of ideals. It therefore follows that for any edge-labeled tree $(T,\alpha)$ the module of splines $\Spl_R(T,\alpha)$ is generated in the following way: for each maximal subtree $T'$ of $T$ whose every edge is labeled with a $0$, one has a generator $\mathbf{1}^{T'}$, the indicator function for the vertices of $T'$. Notice that for each vertex $v$ whose every adjacent edge is labeled with $R$, one has a generator $\mathbf{1}^v$, the indicator function for $v$.  

This reveals to us two things about the $\mathcal{G}_{0,S}^{\op}$ -module $\Spl_R$. Firstly, one can see from this that $\Spl_R$ is generated by the single vertex tree, as well as the single edge tree with edge-label $R$. 

Secondly, being that we are working over a field, the integral rank of $\Spl_R(T,\alpha)$ is equal to the dimension of this vector space over $R$, which the above computation has shown is equal to the number of components in the spanning subforest of $T$ generated by the edges of $T$ with label zero.  That is, it is one more than the number of edge-labels equal to $R$. In other words, writing $n_0$ for the number of edges with label 0, we have 
\[
\dim_R \Spl_R(T,\alpha) = |E_T|-n_0 + 1.
\]

It follows that the power series
\[
\sum_{w \in \mathcal{D}(S)}(|E_{T(w)}|-n_0 + 1) t^{|E_{T(w)}|}
\]
must be algebraic. 

In this case, it is also possible to deduce this algebraicity from more elementary analytic combinatorics.  To this end, notice that
\begin{align*}
\sum_{w \in \mathcal{D}(S)}(|E_{T(w)}|-n_0 + 1) t^{|E_{T(w)}|} = \sum_{w \in \mathcal{D}(S)}|E_{T(w)}| t^{|E_{T(w)}|}-\sum_{w \in \mathcal{D}(S)}n_0 t^{|E_{T(w)}|} + \sum_{w \in \mathcal{D}(S)}t^{|E_{T(w)}|}.
\end{align*}

The first and third power series on the right hand side are clearly algebraic, as the third power series is the generating function for the unambiguous context-free language $\mathcal{D}(S)$, and the first is $t$ times the derivative of an algebraic power series. The middle series is a bit trickier, but can be seen to be algebraic by the use of an auxiliary language. Let $\mathcal{D}(S)'$ be the language on the alphabet $(_0 ,(_R, )_0, )_R, (_0^\ast, )_0^\ast$ built in the following way: the words in this language are almost identical to those in $\mathcal{D}(S)$, except that the symbol pair of $(_0^\ast$ and $)_0^\ast$ must appear exactly once.  It can be shown that this language is unambiguous and context free by modifying the usual push down automaton for $\mathcal{D}(S)$ to include an extra state which keeps track of whether an edge has been marked yet or not.  One may think of a word in $\mathcal{D}(S)$ as being an edge-labeled planar rooted tree, but with exactly one of its edges labeled zero being marked in some way. In particular, for every word in $\mathcal{D}(S)$ whose corresponding edge-labeled tree is $T$, there are precisely $n_0$ words in $\mathcal{D}(S)$ whose associated tree is also $T$, given by choosing which edge labeled $0$ is to be marked. Then the generating function (with norm only counting left parenthesis) for this language is
\[
\sum_{w \in \mathcal{D}(S')}t^{|w|/2} = \sum_{w \in \mathcal{D}(S)}n_0t^{|w|/2}.
\]
\end{example}

\begin{example}
We next consider the case of $n = 2$, i.e. splines of trees over $R = \Z/p^2\Z$. This case was essentially completed in \cite[Theorem 5.2]{BT}. In that work it was shown that if all edge-labels are the unique non-trivial proper ideal $(p)$, then the integral rank of the module of splines is precisely the number of vertices of $T$. In particular, if we had chosen to limit our edge-labels to only include $(p)$, our Hilbert--Dyck series becomes the clearly algebraic
\[
\sum_{w \in \mathcal{D}(S)} (|w|/2 + 1)t^{|w|/2}.
\]

In the cases where we allow for the $0$ ideal to be an edge-label, one notes that the integral rank of splines over a tree $T$ is equal to that of the integral rank of splines over the tree obtained from $T$ by contracting all of the edges whose label is $0$. This observation therefore lands us back in the previous case. More specifically, if $w$ is a word in $\mathcal{D}(S)$ and $n_0$ is the number of letters in this word that use the label 0, then the integral rank of $\Spl_R(T(w))$ is precisely
\[
|E_{T(w)}| - n_0 + 1.
\]
This generating function was proven to be algebraic in the previous example.
\end{example}

The combinatorics of the Hilbert--Dyck series becomes considerably more complicated as one works with more complex rings such as $\Z/p^n\Z$ for $n > 2$. Even using the computation techniques of \cite{BT}, explicit computation for larger $n$ seems difficult. Despite this fact, Theorem \ref{primePower} guarantees that the integral rank of the module of splines in these settings must grow and behave in a  controlled way.

\section{Application: the genus one and two cases}\label{examples}

Having focused on the genus $0$ case (trees) in Section~\ref{section: algebraicity of trees}, we will now focus on the genus $1$ and $2$ cases.  The strategies developed previously in the paper allow us in many circumstances to recursively identify spline spaces and create bases, especially flow-up bases.  In Section~\ref{section: surjective quotient map and sections} we look at cases where the contraction maps described earlier in this paper give a \emph{section} in the topological sense, namely a map from a quotient to the original space whose composition with the quotient map is the identity.  We then extend this analysis to another class of contraction maps.  In Section~\ref{section: genus one and two} we use this to classify splines on all genus one and genus two graphs.  In all of Section~\ref{examples}, we take $R$ to be any commutative ring with identity.

\subsection{Restrictions of splines} \label{section: surjective quotient map and sections}

Section \ref{SplineAsModule} described a family of contraction maps on graphs $\varphi\colon G \rightarrow G'$ that induce maps on splines $\varphi^*\colon \Spl_R(G',\alpha') \rightarrow \Spl_R(G,\alpha)$.  The map $\varphi^*$ is generally not a bijection and cannot be inverted.  But if $G'$ is a particular kind of subgraph of $G$ then restricting ${\bf p} \in \Spl_R(G,\alpha)$ to the vertices in $G'$ gives a well-defined map on splines. In this case the composition $\rho \circ \varphi^*$ is the identity.

Motivated by constructions in topology, we say that $G$ is the \textit{one-point union} of two graphs $G_1$ and $G_2$ at vertex $v$, and write $G = G_1 \vee G_2$, if $V_{G_1} \cap V_{G_2} = \{v\}$ and the only edges $uu'$ with $u \in V_{G_1}$ and $u' \in V_{G_2}$ occur when one of $u, u'$ is $v$. In this case, each graph $G_i$ can be realized as both a subgraph of $G_1 \vee G_2$ and the image of a contraction of $G_1 \vee G_2$, if we relax the fourth condition in the definition of contraction (see Definition~\ref{def:contraction}). Note that in the notation $G_1 \vee G_2$, we have suppressed mention of the vertex $v$ to avoid cumbersome notation; however, in what follows we sometimes write $G_1 \vee_v G_2$ when it is important to emphasize it.


If $(G,\alpha)$ is any edge-labeled graph and $U \subseteq V_{G}$ is any subset of vertices in $G$, then we define the submodule of splines \textit{based at $U$} as
    \[\Spl_R(G,\alpha;U) = \{ {\bf p} \in \Spl_R(G,\alpha) \mid {\bf p}_v = 0 \text{ for all } v \in U\}. \] 
    It was shown in \cite[Lemma 2.8]{ACFMT} that for any edge-labeled graph $(G,\alpha)$ and vertex $v \in V_G$ we have 
\begin{equation} \label{equation: based splines}
    \Spl_R(G,\alpha) = R{\bf 1} \oplus \Spl_R(G,\alpha; v)
\end{equation}

Using this, we can express splines on a one-point union as follows.

\begin{lemma} \label{lemma: one-point union splines}
    Suppose $(G_1 \vee G_2, \alpha)$ is an edge-labeled graph that is a one-point union of two subgraphs $(G_1,\alpha_1), (G_2,\alpha_2)$ at $v$, where each $\alpha_i = \alpha|_{E_{G_i}}$. For each $i$, 
    \begin{itemize}
        \item let $\varphi_i\colon G_1 \vee G_2 \rightarrow G_i$ denote the generalized contraction obtained by sending all vertices outside of $V_{G_i}$ to $v$, and
        \item let $\rho_i\colon \Spl_R(G_1 \vee G_2, \alpha) \rightarrow \Spl_R(G_i, \alpha_i)$ denote the restriction of splines from the one-point union to $G_i$.
    \end{itemize}
    Then for each $i$ the restriction map $\rho_i$ is surjective and $\varphi_i^*$ is a section of the map, in the sense that $\rho_i \circ \varphi_i^*$ is the identity. Equivalently, we have the following:
   \[\begin{array}{ll}
    \Spl_R(G_1 \vee G_2,\alpha) &= R {\bf 1} \oplus \varphi_1^*(\Spl_R(G_1,\alpha_1; v)) \oplus \varphi_2^*(\Spl_R(G_2,\alpha_2; v)) \\
    &= \varphi_1^*(\Spl_R(G_1,\alpha_1)) \oplus \varphi_2^*(\Spl_R(G_2,\alpha_2; v)) \\
    &= \varphi_1^*(\Spl_R(G_1,\alpha_1; v)) \oplus \varphi_2^*(\Spl_R(G_2,\alpha_2)). 
    \end{array}\]
\end{lemma}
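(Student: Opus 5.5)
The proof is a direct verification from the definitions, after which the three decompositions follow by linear algebra.

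\textbf{Step 1: the maps are well defined.} The restriction $\rho_i$ lands in $\Spl_R(G_i,\alpha_i)$, because every edge of $G_i$ is an edge of $G_1\vee G_2$ with the same label $\alpha_i=\alpha|_{E_{G_i}}$. For $\varphi_i^*$, take ${\bf q}\in\Spl_R(G_i,\alpha_i)$. The vector $\varphi_i^*({\bf q})$ equals ${\bf q}_u$ at vertices $u\in V_{G_i}$ and equals ${\bf q}_v$ at all other vertices. We check the edge condition edge by edge.
\begin{itemize}
\item An edge inside $G_i$ carries its original condition.
\item An edge with both endpoints outside $V_{G_i}$ has difference $0$.
\item An edge joining $V_{G_i}$ to a vertex outside it must have $v$ as its $G_i$-endpoint, by the definition of one-point union. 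Its difference is therefore ${\bf q}_v-{\bf q}_v=0$.
\end{itemize}
This does not depend on the contraction axioms, so the relaxation of condition 4 causes no trouble. Since $\varphi_i$ fixes $V_{G_i}$ pointwise, $\rho_i\circ\varphi_i^*=\mathrm{id}$, and hence $\rho_i$ is surjective.

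\textbf{Step 2: the first decomposition.} By \eqref{equation: based splines}, it suffices to show
\[
\Spl_R(G_1\vee G_2,\alpha;v)=\varphi_1^*(\Spl_R(G_1,\alpha_1;v))\oplus\varphi_2^*(\Spl_R(G_2,\alpha_2;v)).
\]
Take ${\bf p}$ with ${\bf p}_v=0$ and set ${\bf q}^i=\rho_i({\bf p})\in\Spl_R(G_i,\alpha_i;v)$.

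We claim ${\bf p}=\varphi_1^*({\bf q}^1)+\varphi_2^*({\bf q}^2)$. This holds coordinatewise because $V_{G_1}\cup V_{G_2}=V_G$. At $u\in V_{G_1}\setminus\{v\}$ the second summand is ${\bf q}^2_v=0$, and symmetrically on $G_2$. At $v$ both summands vanish.

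For directness, suppose $\varphi_1^*({\bf a})=\varphi_2^*({\bf b})$ with ${\bf a}_v={\bf b}_v=0$. The left side vanishes off $V_{G_1}\setminus\{v\}$ and the right side vanishes off $V_{G_2}\setminus\{v\}$. These sets are disjoint, so both sides are zero. The reverse inclusion $\supseteq$ is clear.

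\textbf{Step 3: the other two decompositions.} Apply \eqref{equation: based splines} to $G_1$: $\Spl_R(G_1,\alpha_1)=R{\bf 1}\oplus\Spl_R(G_1,\alpha_1;v)$. Since $\varphi_1^*({\bf 1})={\bf 1}$ and $\varphi_1^*$ is injective (it has a left inverse $\rho_1$), we get
\[
\varphi_1^*(\Spl_R(G_1,\alpha_1))=R{\bf 1}\oplus\varphi_1^*(\Spl_R(G_1,\alpha_1;v)).
\]
Substituting this into the first decomposition gives the second line. The third line follows by the same argument with the indices swapped.

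The only step requiring care is the edge check between the two sides in Step 1. It depends on the one-point-union hypothesis that every cross edge passes through $v$; this is where one must make sure loops and multi-edges at $v$ cause no issue. They do not, since any such edge has difference zero.
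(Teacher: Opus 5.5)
Your proof is correct and is essentially the paper's argument: both rest on $\rho_i\circ\varphi_i^*=\mathrm{id}$, on the observation that a spline on $G_1\vee G_2$ is recovered from its two restrictions, and on the splitting $\Spl_R(G,\alpha)=R{\bf 1}\oplus\Spl_R(G,\alpha;v)$. The differences are organizational and minor. The paper identifies $\ker\rho_1=\varphi_2^*(\Spl_R(G_2,\alpha_2;v))$ and uses the section $\varphi_1^*$ to obtain the second line first, whereas you decompose based splines directly to get the first line and derive the others from it; you also explicitly check that $\varphi_i^*$ lands in splines for the generalized contraction, which the paper takes for granted.
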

    
    \begin{proof} 
    Since both $\varphi_i^*$ and $\rho_i$ are defined to be the identity on all vertices in $G_i$ we know the composition $\rho_i \circ \varphi_i^*$ is the identity for each $i$.  For each $i$ we have
    \[\varphi_i^*(\Spl_R(G_i,\alpha_i)) \subseteq \Spl_R(G_1 \vee G_2, \alpha),\]
    so we conclude also that $\rho_i$ is surjective for each $i$.

    The kernel of the map $\rho_1$ is defined to be the collection of splines that are zero on all of the vertices of $G_1$.  In other words we have
    \[\ker \rho_1 = \varphi_2^*(\Spl_R(G_2,\alpha_2;v)),\]
    and similarly for $\rho_2$.  The rest of the claim follows from the First Isomorphism Theorem and Equation~\eqref{equation: based splines}.
    \end{proof}

    This leads to several corollaries whose proofs are immediate.  To start, observe that if $\Spl_R(G_1,\alpha_1)$ and $\Spl_R(G_2,\alpha_2)$ both have flow-up bases then the previous result produces a flow-up basis on $G_1 \vee G_2$. In other words, the proof of the following is immediate.

\begin{corollary} \label{corollary: flow-up bases}
    Suppose $(G_1 \vee G_2, \alpha)$ is a one-point union at $v$ with notation as in Lemma~\ref{lemma: one-point union splines}.  If $\mathcal{B}_i$ is a flow-up basis for each $\Spl_R(G_i,\alpha_i)$, the vertices of $G_1$ are all ordered before the vertices of $G_2$ (including $v$), and $\bf{p}^v$ denotes the flow-up basis element in $\mathcal{B}_2$ corresponding to vertex $v$ then 
    \[\varphi_1^*(\mathcal{B}_1) \cup \varphi_2^*(\mathcal{B}_2 - \{\bf{p}^v\}) \]
    is a flow-up basis for $\Spl_R(G_1 \vee G_2,\alpha)$. 
\end{corollary}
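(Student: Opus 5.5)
The natural route is to read the flow-up basis for $\Spl_R(G_1\vee G_2,\alpha)$ off the third decomposition in Lemma~\ref{lemma: one-point union splines},
\[\Spl_R(G_1 \vee G_2,\alpha) = \varphi_1^*(\Spl_R(G_1,\alpha_1)) \oplus \varphi_2^*(\Spl_R(G_2,\alpha_2; v)),\]
and then verify two things separately: that the proposed set generates (with the right size, so that it is minimal), and that each of its elements is flow-up in the total order on $V_{G_1\vee G_2}$. Here we order the vertices of $G_1$ first in their own order, ending or at least including $v$, and then the vertices of $G_2$ other than $v$ in the order inherited from $G_2$.

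The first step is to identify $\Spl_R(G_2,\alpha_2;v)$ in terms of $\mathcal{B}_2$. Since $v$ is the smallest vertex of $G_2$ (it is listed among the $G_1$ vertices, which all precede the rest of $G_2$), the flow-up basis element $\mathbf{p}^v$ is the only element of $\mathcal{B}_2$ that may be nonzero at $v$; every other element is flow-up at a vertex strictly after $v$ and so vanishes at $v$. Using the triangularity of a flow-up basis, I would show that $\mathcal{B}_2-\{\mathbf{p}^v\}$ generates $\Spl_R(G_2,\alpha_2;v)$. Take $\mathbf{q}$ vanishing at $v$ and write $\mathbf{q}=\sum c_b b$. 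Evaluating at $v$ gives $c_{\mathbf{p}^v}\mathbf{p}^v_v=0$. One can then replace $\mathbf{p}^v$ by $\mathbf{1}$, via equation~\eqref{equation: based splines} and the fact that $\mathbf{1}$ is flow-up at $v$, so the decomposition $R\mathbf{1}\oplus\Spl_R(G_2,\alpha_2;v)$ shows that the remaining elements span the based part. This is the main obstacle: over a general ring, $\mathbf{p}^v_v$ need not be a unit a priori. I would first argue that minimality of a flow-up basis forces $\mathbf{p}^v$ to be replaceable by $\mathbf{1}$ (since $\mathbf{1}$ is in the module and is flow-up at $v$). Failing that, I would state the convention that the flow-up element at the minimal vertex is $\mathbf{1}$.

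The second step is to transport everything along $\varphi_i^*$. Vertex expansion sends $\mathbf{p}\in\Spl_R(G_1,\alpha_1)$ to the spline equal to $\mathbf{p}$ on $G_1$ and to $\mathbf{p}_v$ on all of $G_2$; it stays flow-up at the same vertex because all $G_2$ vertices come later. A based spline on $G_2$ expands by zero on $G_1$, so it remains flow-up at its original vertex, which lies in $G_2\setminus\{v\}$. The direct sum decomposition gives generation. For minimality, the count $|\mathcal{B}_1|+|\mathcal{B}_2|-1$ equals one per vertex when the flow-up bases have one element per vertex. More generally, any smaller generating set would restrict, via the surjections $\rho_1$ and the projection onto the second summand, to smaller generating sets of the factors, contradicting the minimality of $\mathcal{B}_1$ or of $\mathcal{B}_2-\{\mathbf{p}^v\}$ in the based module.
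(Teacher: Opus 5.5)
Your route is what the paper means by ``immediate'': read the generating set off the decomposition $\Spl_R(G_1\vee G_2,\alpha)=\varphi_1^*(\Spl_R(G_1,\alpha_1))\oplus\varphi_2^*(\Spl_R(G_2,\alpha_2;v))$ from Lemma~\ref{lemma: one-point union splines}, and check that vertex expansion preserves flow-up splines. Your generation and flow-up checks are fine once one step is actually closed. The step you call the main obstacle is not one, but you leave it open, appealing either to minimality or to a new convention (which would change the hypothesis). Since $\mathbf{1}\in\Spl_R(G_2,\alpha_2)$, write $\mathbf{1}=a\,\mathbf{p}^v+\sum_{b\neq\mathbf{p}^v}a_b\,b$ over $b\in\mathcal{B}_2$ and evaluate at $v$. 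Every $b\neq\mathbf{p}^v$ vanishes there, so $a\,\mathbf{p}^v_v=1$ and $\mathbf{p}^v_v$ is a unit. Then $c_{\mathbf{p}^v}\mathbf{p}^v_v=0$ forces $c_{\mathbf{p}^v}=0$, so $\mathcal{B}_2-\{\mathbf{p}^v\}$ generates $\Spl_R(G_2,\alpha_2;v)$. It is the fact that $\mathcal{B}_2$ generates, not its minimality, that does the work.

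The genuine gap is the minimality step. Your ``more generally'' argument assumes that the minimal number of generators $\mu$ is additive on direct sums. Projecting a generating set $X$ of $A\oplus B$ to the two factors gives generating sets of size at most $|X|$, which contradicts nothing when $|X|<\mu(A)+\mu(B)$ (think of $\Z/2\Z\oplus\Z/3\Z\cong\Z/6\Z$). With ``basis'' meaning fewest elements, as the paper defines it, the conclusion actually fails. Take $R=\Z/6\Z$, $G_1$ the edge $v_0v$ labeled $(3)$, $G_2$ the edge $vw$ labeled $(2)$, and order $v_0<v<w$, listing coordinates in that order. The spline modules on $G_1$ and $G_2$ have $12$ and $18$ elements, while cyclic $R$-modules have at most $6$. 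Hence $\mathcal{B}_1=\{\mathbf{1},(0,3)\}$ and $\mathcal{B}_2=\{\mathbf{1},(0,2)\}$ are flow-up bases, and the union is $\{\mathbf{1},(0,3,3),(0,0,2)\}$. But $\{\mathbf{1},(0,3,5)\}$ is already a flow-up generating set of $\Spl_R(G_1\vee G_2,\alpha)$, because the six multiples of $(0,3,5)$ exhaust the six-element module $\Spl_R(G_1\vee G_2,\alpha;v_0)$.

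What your projection argument does prove is irredundance. Suppose a proper subset of the union generated. Apply $\rho_1$, or apply $\rho_2$ followed by $\mathbf{q}\mapsto\mathbf{q}-\mathbf{q}_v\mathbf{1}$ and then use the unit $\mathbf{p}^v_v$. Either way you get a proper generating subset of $\mathcal{B}_1$ or of $\mathcal{B}_2$, a contradiction. Getting ``fewest elements'' needs extra input, e.g.\ $R$ local, where Nakayama makes $\mu(M)=\dim_{R/\mathfrak{m}}(M/\mathfrak{m}M)$ additive. Your one-element-per-vertex count settles the statement only if ``flow-up basis'' is read as ``generating set with one flow-up element per vertex''; the example above has one per vertex and is still not minimum. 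The paper's one-line proof passes over the same point, so you should say explicitly which reading you are proving.
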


The previous corollary reconstructs the flow-up basis on trees from \cite[Section 4]{GTV} in the special case when $G_1 \vee G_2$ is a tree.  More generally, we obtain considerable information about the generating set for any graph obtained by repeatedly taking the one-point union of a graph with a sequence of trees.

    \begin{corollary}\label{corollary: wedging trees}
  Suppose that $G_1$ is a graph without any leaves, equivalently suppose all vertices in $G_1$ have degree at least two. Suppose $G_k$ is any graph formed recursively from $G_1$ by a one-point union $G_{i+1} = G_i \vee_{v_i} T_i$ with a tree $T_i$. Let $\alpha$ be an edge-labeling on $G_k$.  Then 
\[\Spl_R(G_k,\alpha) \cong \Spl_R(G_1,\alpha|_{G_1}) \oplus \bigoplus_{i=1}^{k-1} \Spl_R(T_i,\alpha|_{T_i}; v_i).\]
Suppose that $\mathcal{G}$ is a minimal generating set for $\Spl_R(G_1,\alpha|_{G_1})$, the vertices of $G_k$ are ordered so that all vertices in $T_i$ come after $v_i$, and $\mathcal{B}_i$ is a flow-up basis for each tree $T_i$.  Through slight abuse of notation, let $\varphi_k^*(\bf{p})$ denote the image of any spline on $G_1, T_1, \ldots, T_{k-1}$ in $\Spl_R(G_k,\alpha)$.  Then we get a minimal generating set for $\Spl_R(G_k,\alpha)$ by
\[\varphi_k^*(\mathcal{G}) \cup \bigcup_{i=1}^{k-1} \varphi_k^*\left(\mathcal{B}_i - \{\bf{p}^{v_i}\}\right)\]
($\bf{p}^{v_i}$ is defined analogously here as in Lemma~\ref{lemma: one-point union splines}), and this set is flow-up on $G_k$ whenever $\mathcal{G}$ is flow-up on $G_1$.
    \end{corollary}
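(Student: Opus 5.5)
The plan is to induct on $k$, applying Lemma~\ref{lemma: one-point union splines} once at each step. The base case $k=1$ is trivial: the direct sum over $i$ is empty and the generating set is $\mathcal{G}$ itself. For the inductive step, write $G_{i+1} = G_i \vee_{v_i} T_i$ and apply the second decomposition of Lemma~\ref{lemma: one-point union splines} with $G_i$ in the first slot and $T_i$ in the second. This gives
\[
\Spl_R(G_{i+1},\alpha|_{G_{i+1}}) = \varphi_1^*\bigl(\Spl_R(G_i,\alpha|_{G_i})\bigr) \oplus \varphi_2^*\bigl(\Spl_R(T_i,\alpha|_{T_i};v_i)\bigr).
\]
Each $\varphi^*$ is injective, since $\rho\circ\varphi^*$ is the identity. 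So the right side is isomorphic to $\Spl_R(G_i,\alpha|_{G_i})\oplus \Spl_R(T_i,\alpha|_{T_i};v_i)$. Combining this with the inductive hypothesis gives the stated isomorphism. The hypothesis that $G_1$ has no leaves plays no role in the algebra; it only makes the decomposition canonical. I would remark on this rather than use it.

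For the generating set, the two pieces are handled separately.

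\begin{enumerate}
\item The first summand is the image of the previous stage. By induction it is generated by $\varphi^*$-images of $\mathcal{G}$ and of the sets $\mathcal{B}_j-\{\mathbf{p}^{v_j}\}$ for $j<i$. Composites of the generalized contractions are again maps of the same type, which justifies the abuse of notation $\varphi_k^*$.
\item For the second summand, I claim that $\mathcal{B}_i-\{\mathbf{p}^{v_i}\}$ is a basis of $\Spl_R(T_i,\alpha|_{T_i};v_i)$. Because vertices of $T_i$ come after $v_i$, the vertex $v_i$ is the minimal vertex of $T_i$. Hence $\mathbf{p}^{v_i}$ is the only element of $\mathcal{B}_i$ that can be nonzero at $v_i$. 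Every other flow-up element $\mathbf{p}^u$ with $u>v_i$ vanishes at $v_i$, so it lies in the based module. For the tree flow-up basis of \cite{GTV} we may take $\mathbf{p}^{v_i}=\mathbf{1}$. Equation~\eqref{equation: based splines} then shows that the remaining elements span the based module, and they are independent because they form part of a basis.
\end{enumerate}

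Minimality of the union follows because the sum is direct. Any generating set of a direct sum whose pieces generate the summands is minimal once each piece is minimal in its summand. I expect this to be the main obstacle. Over a general commutative ring, minimality of a generating set is not automatically additive across direct sums. I would handle it by noting that the tree pieces are free with bases, so the argument reduces to the minimality of $\mathcal{G}$ on $G_1$. If that reduction falls short, I would cite a minimal-size reading of ``minimal'' via $\mathcal{G}$ plus the free ranks.

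Flow-up is preserved as follows. The elements $\varphi_k^*(\mathbf{p})$ for $\mathbf{p}\in\mathcal{B}_i-\{\mathbf{p}^{v_i}\}$ are supported in $T_i\setminus\{v_i\}$, and their leading vertex keeps its position, since all vertices of $T_i$ follow $v_i$. The images of $\mathcal{G}$ are constant on each tree's vertices, with the value at the attaching vertex. So their leading vertex lies in $G_1$, and they remain flow-up whenever $\mathcal{G}$ is flow-up on $G_1$.
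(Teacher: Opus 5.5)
Your route is the paper's: induct on $k$, split off each tree with the second decomposition of Lemma~\ref{lemma: one-point union splines}, and handle the tree pieces as in Corollary~\ref{corollary: flow-up bases}. The isomorphism and the generation statement come out correctly. The step that does not hold together is \emph{minimality}.

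Your first two sentences there are right, and they already suffice, if ``minimal'' means irredundant. Suppose $M=M_1\oplus M_2$ has irredundant generating sets $X\subseteq M_1$ and $Y\subseteq M_2$. If some $x\in X$ lay in the span of $(X\setminus\{x\})\cup Y$, then projecting to $M_1$ would put it in the span of $X\setminus\{x\}$, a contradiction. The pieces are irredundant: $\varphi_k^*(\mathcal{G})$ because $\varphi_k^*$ is injective, and $\mathcal{B}_i\setminus\{\bp^{v_i}\}$ because a redundant element there would also be redundant in $\mathcal{B}_i$. This is evidently the sense in which the paper's one-line inductive proof delivers minimality.

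Your proposed fix, by contrast, is false and should be dropped, for two reasons.
\begin{enumerate}
\item The based tree modules are $\Spl_R(T_i,\alpha|_{T_i};v_i)\cong\bigoplus_{e\in E_{T_i}}\alpha(e)$, which need not be free. One edge labeled $(p)$ over $\Z/p^2\Z$ gives $\Z/p\Z$, even though $\{\mathbf{1},(0,p)\}$ is a flow-up basis in the paper's sense.
\item Even a genuinely free summand does not make minimum generator counts additive. Over $A=\R[x,y,z]/(x^2+y^2+z^2-1)$, the tangent module $P$ of the $2$-sphere satisfies $P\oplus A\cong A^3$ but is not free. So $P$ and $P\oplus A$ both need exactly $3$ generators, and minimum generating sets of $P$ and $A$ combine to a non-minimum one.
\end{enumerate}
Under a minimum-cardinality reading, neither your argument nor the paper's proves the claim. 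Likewise, ``independent because they form part of a basis'' has no content over a general $R$, where ``basis'' means a minimum generating set; irredundance is what you actually use.

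Two smaller repairs are needed. First, the statement allows an arbitrary flow-up basis $\mathcal{B}_i$, so do not specialize to $\bp^{v_i}=\mathbf{1}$. Instead, write $\mathbf{1}$ as a combination of $\mathcal{B}_i$ and evaluate at the minimal vertex $v_i$, where only $\bp^{v_i}$ can be nonzero. This shows $\bp^{v_i}_{v_i}$ is a unit, so any spline vanishing at $v_i$ has zero coefficient on $\bp^{v_i}$. Second, for $\bp\in\mathcal{B}_i\setminus\{\bp^{v_i}\}$ the image $\varphi_k^*(\bp)$ is not supported in $T_i\setminus\{v_i\}$ when a later tree is attached at a vertex of $T_i$, since that tree copies the value at its attaching vertex. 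Flow-up still holds, because every vertex added after stage $1$ comes after its attaching vertex. Hence a vertex preceding the leading vertex of $\bp$ inherits its value from a vertex that also precedes it, and that value is $0$. The same tracing back to an anchor vertex in $G_1$ is what justifies your flow-up claim for $\varphi_k^*(\mathcal{G})$.
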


    \begin{proof}
All of the claims follow inductively from Lemma~\ref{lemma: one-point union splines} and Corollary~\ref{corollary: flow-up bases}.
    \end{proof}

Recall that a \emph{bridge} is an edge whose removal disconnects a graph.  The next corollary analyzes splines on a graph that contains a path whose edges are all bridges.  It extends the result \cite[Corollary 2.11]{ACFMT}.

\begin{corollary} \label{corollary: bridges}
Suppose that $G, G'$ are two disjoint graphs containing vertices $u, u'$ respectively, and that $P_k = v_0 v_1 v_2 \cdots v_k$ is a path of length $k$ that is disjoint from both $G$ and $G'$ except $v_0=u$ and $v_k = u'$.  Let $\varphi\colon G \vee P_k \vee G' \rightarrow G$ contract all vertices outside of $V_G$ to $u$, and similarly let $\varphi'\colon G \vee P_k \vee G' \rightarrow G'$ contract all vertices outside $V_{G'}$ to $u'$.  Let $\varphi_k\colon G \vee P_k \vee G' \rightarrow P_k$ contract all vertices in $V_G$ to $u$ and all vertices in $V_{G'}$ to $u'$.  Then 
\[\Spl_R(G \vee P_k \vee G',\alpha) = \varphi^*(\Spl_R(G,\alpha|_{G})) \oplus \varphi_k^*(\Spl_R(P_k,\alpha_{P_k)}; v_0)) \oplus \varphi'^*(\Spl_R(G',\alpha|_{G'};u')).\]
Moreover, if vertices in $G$ are ordered first, followed by those in $P_k$, and then by those in $G'$, and if $\mathcal{B}, \mathcal{B}_k, \mathcal{B}'$ are generating sets for $\Spl_R(G,\alpha|_{G}), \Spl_R(P_k,\alpha|_{P_k}), \Spl_R(G',\alpha|_{G'}; u')$, respectively, with $\mathcal{B}_k$ flow-up, then
\[\varphi^*(\mathcal{B}) \cup \varphi_k^*(\mathcal{B}_k - \{\bf{p}^{v_0}\}) \cup \varphi'^*(\mathcal{B}')\]
is a generating set for $\Spl_R(G \vee P_k \vee G',\alpha)$ that is minimal (respectively flow-up) if both $\mathcal{B}$ and $\mathcal{B}'$ are.
\end{corollary}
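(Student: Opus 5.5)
The plan is to apply Lemma~\ref{lemma: one-point union splines} twice, viewing $G \vee P_k \vee G'$ as an iterated one-point union. First, write $H = G \vee_u (P_k \vee_{u'} G')$; that is, $G_1 = G$ and $G_2 = P_k \vee_{u'} G'$, glued at $v_0 = u$. The one-point union hypotheses hold because $P_k$ meets $G$ only at $v_0$ and $G'$ meets $G$ not at all. The second line of the display in Lemma~\ref{lemma: one-point union splines} then gives
\[
\Spl_R(H,\alpha) = \varphi^*(\Spl_R(G,\alpha|_G)) \oplus \psi^*(\Spl_R(P_k \vee G', \alpha|_{P_k\vee G'}; u)),
\]
where $\psi$ collapses $V_G$ to $u$.

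Next, decompose $\Spl_R(P_k \vee_{u'} G'; u)$. Apply the third line of the lemma to $P_k \vee_{u'} G'$ with $G_1 = G'$ and $G_2 = P_k$ (the roles can be swapped by symmetry). This gives
\[
\Spl_R(P_k\vee G') = \Spl_R(P_k)' \oplus \Spl_R(G';u')',
\]
where the primes denote the images under the respective collapsing maps. Intersect with the condition ${\bf p}_u = 0$. A spline from $G'$ based at $u'$ vanishes on every vertex of $P_k$, including $v_0 = u$, so the based condition constrains only the $P_k$-summand. It therefore becomes $\Spl_R(P_k; v_0)$ pulled back. Composing the contractions shows that $\psi^*$ of these pieces equals $\varphi_k^*(\Spl_R(P_k;v_0))$ and $\varphi'^*(\Spl_R(G';u'))$: collapsing $V_G$ to $u$ and then $V_{G'}$ to $u'$ is exactly $\varphi_k$, and similarly for $\varphi'$. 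This yields the direct sum decomposition.

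For the generating set, use \eqref{equation: based splines} on $P_k$. If $\mathcal{B}_k$ is flow-up and the vertex order on $P_k$ starts with $v_0$, then $\mathcal{B}_k - \{{\bf p}^{v_0}\}$ consists of splines vanishing at $v_0$. These generate $\Spl_R(P_k;v_0)$, since ${\bf p}^{v_0}$ may be replaced by a multiple of ${\bf 1}$ modulo the others, following the same argument as in Corollary~\ref{corollary: flow-up bases}. The union of images of generating sets of direct summands generates the direct sum. Minimality also transfers: a minimal generating set of a direct sum of the given shape is obtained from minimal ones of the summands, provided $\mathcal{B}_k - \{{\bf p}^{v_0}\}$ is minimal for the based module, which is where the path structure is used. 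The flow-up property is preserved because the vertex orders are concatenated ($G$, then $P_k$, then $G'$) and each pulled-back spline vanishes before its leading vertex. Specifically, $\varphi_k^*$ of a spline vanishing at $v_0$ vanishes on all of $G$, and $\varphi'^*$ of a spline vanishing at $u'$ vanishes on $G$ and $P_k$.

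The main obstacle is the bookkeeping in the second step. I need to verify that the based condition at $u$ interacts correctly with the inner decomposition, and that the composites of the generalized contractions really are $\varphi_k$ and $\varphi'$. I also need to check minimality of $\mathcal{B}_k-\{{\bf p}^{v_0}\}$ for $\Spl_R(P_k;v_0)$; this relies on ${\bf p}^{v_0}$ being the unique basis element nonzero at $v_0$ in a flow-up basis.
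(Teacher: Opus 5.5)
Your proposal is correct and is essentially the paper's argument: two applications of Lemma~\ref{lemma: one-point union splines}, followed by the reasoning of Corollary~\ref{corollary: flow-up bases}. The one difference is that you bracket the union as $G \vee (P_k \vee G')$ instead of the paper's $(G \vee P_k) \vee G'$, which costs you the extra step of intersecting the inner decomposition with the basing condition at $v_0$; you handle that step correctly, and the composites of the collapsing maps are indeed $\varphi_k$ and $\varphi'$. The phrase ``$\mathbf{p}^{v_0}$ may be replaced by a multiple of $\mathbf{1}$ modulo the others'' needs a reason, and the clean one is this: $(\mathbf{p}^{v_0})_{v_0}$ must be a unit, because $\mathbf{p}^{v_0}$ is the only element of $\mathcal{B}_k$ nonzero at $v_0$ while $\mathcal{B}_k$ generates $\mathbf{1}$. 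Hence $\mathbf{p}^{v_0}$ gets coefficient zero in any expression of a spline vanishing at $v_0$, and the same fact gives minimality of $\mathcal{B}_k - \{\mathbf{p}^{v_0}\}$ once $\mathcal{B}_k$ is a flow-up basis.
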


\begin{proof}
First take $G_1 = G \vee P_k$ and $G_2 = G'$ in Lemma~\ref{lemma: one-point union splines} and then apply Lemma~\ref{lemma: one-point union splines} again with $G_1 = G$ and $G_2 = P_k$ to get the decomposition. Then apply Corollary~\ref{corollary: flow-up bases} to get the result about generators.  
\end{proof}

We end this section with a lemma that describes the effects of contracting a  path when exactly one edge-label is changed.

\begin{lemma} \label{lemma: contracting path}
Suppose that $G$ contains a path $P_k = v_0 v_1 \cdots v_k$ and that all of $v_1, v_2, \ldots, v_{k-1}$ have degree two in $G$.  Suppose that $G'$ is the graph with $V_{G'} = V_G - \{v_1, \ldots, v_{k-1}\}$ and $E_{G'} = (E_G \cup \{v_0v_k\}) - \{v_0v_1, v_1v_2, \ldots, v_{k-1}v_k\}$. Let $\alpha$ be an edge-labeling of $G$ and let $\alpha'$ be the edge-labeling of $G'$ defined by $\alpha'(e)=\alpha(e)$ for all $e \neq v_0v_k$ and by 
\[\alpha'(v_0v_k) = \alpha(v_0v_1) \oplus \alpha(v_1v_2) \oplus \cdots \oplus \alpha(v_{k-1}v_k).\]
Then the restriction map $\rho\colon \Spl_R(G,\alpha) \rightarrow \Spl_R(G',\alpha')$ is a surjection with kernel isomorphic to $\Spl_R(P_k, \alpha|_{P_k}; \{v_0, v_k\})$.  Furthermore the kernel satisfies
\[\Spl_R(P_k, \alpha|_{P_k}; \{v_0,v_k\}) \cong \Spl_R(C_k, \alpha|_{P_k}; v_0),\] where $C_k$ is the cycle graph with $k$ edges with the same edge-labeling as $P_k$.
\end{lemma}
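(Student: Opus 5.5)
The plan is to verify directly that restriction of splines is well defined, surjective, and has the stated kernel; here $\oplus$ of ideals is read as their sum. The one genuinely nonformal input is the decomposition used for surjectivity. Throughout I use that $v_1,\ldots,v_{k-1}$ have degree two, so the only edges of $G$ touching them are the path edges $v_{i-1}v_i$. Consequently every edge of $G$ other than the path edges is an edge of $G'$ with the same label, and its endpoints lie in $V_{G'}$.

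\emph{Well-definedness.} Take ${\bf p}\in\Spl_R(G,\alpha)$ and let $\rho({\bf p})$ be its restriction to $V_{G'}$. The edge conditions for edges $e\neq v_0v_k$ of $G'$ are inherited verbatim from $G$. For the new edge, telescope:
\[{\bf p}_{v_0}-{\bf p}_{v_k}=\sum_{i=1}^{k}({\bf p}_{v_{i-1}}-{\bf p}_{v_i}).\]
The $i$-th summand lies in $\alpha(v_{i-1}v_i)$, so the sum lies in $\alpha'(v_0v_k)$. Clearly $\rho$ is $R$-linear.

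\emph{Surjectivity (the main step).} Given ${\bf q}\in\Spl_R(G',\alpha')$, the definition of a sum of ideals lets me write
\[{\bf q}_{v_0}-{\bf q}_{v_k}=a_1+\cdots+a_k,\qquad a_i\in\alpha(v_{i-1}v_i).\]
Define ${\bf p}$ by ${\bf p}_w={\bf q}_w$ for $w\in V_{G'}$, and ${\bf p}_{v_i}={\bf q}_{v_0}-(a_1+\cdots+a_i)$ for $1\le i\le k-1$. This formula at $i=0$ and $i=k$ returns ${\bf q}_{v_0}$ and ${\bf q}_{v_k}$, so it is consistent at the endpoints. Each path edge then satisfies ${\bf p}_{v_{i-1}}-{\bf p}_{v_i}=a_i\in\alpha(v_{i-1}v_i)$, and all other edges are edges of $G'$ between vertices where ${\bf p}$ agrees with ${\bf q}$. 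Hence ${\bf p}$ is a spline on $G$ with $\rho({\bf p})={\bf q}$. The only subtlety I expect is making sure no other edges touch the interior vertices, which is exactly the degree-two hypothesis.

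\emph{Kernel.} $\ker\rho$ consists of the splines on $G$ vanishing on $V_{G'}$, hence in particular at $v_0$ and $v_k$. Such a spline is determined by its values on $v_1,\ldots,v_{k-1}$. The only constraints on these values come from the path edges, because every edge outside the path joins two vertices of $V_{G'}$, where the spline is zero. So restriction to $P_k$ gives an isomorphism $\ker\rho\cong\Spl_R(P_k,\alpha|_{P_k};\{v_0,v_k\})$, whose inverse is extension by zero.

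\emph{Cycle identification.} $C_k$ is obtained from $P_k$ by identifying $v_0$ with $v_k$ while keeping all $k$ edges and their labels. A spline on $C_k$ vanishing at the identified vertex is the same data as values on $v_1,\ldots,v_{k-1}$ satisfying the $k$ edge conditions, with both end conditions read against $0$. These are precisely the conditions defining $\Spl_R(P_k,\alpha|_{P_k};\{v_0,v_k\})$, so the obvious map is an isomorphism. For $k=1$ both sides are $0$ (the cycle is a single loop), so the statement also holds in that degenerate case.
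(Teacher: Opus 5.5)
Your proof is correct and follows the paper's argument essentially step for step: telescoping for well-definedness, writing ${\bf q}_{v_0}-{\bf q}_{v_k}$ as a sum of elements of the path labels and taking partial sums for surjectivity, identifying the kernel with splines on $P_k$ based at $\{v_0,v_k\}$, and then identifying the endpoints to get $C_k$. The differences are minor additions the paper leaves implicit: you state outright that $\oplus$ means the sum of ideals, you point to the degree-two hypothesis as what keeps other edges away from the interior vertices, and you check the degenerate case $k=1$.
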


\begin{proof}
First we show the restriction map $\rho$ is well-defined on splines. If ${\bf p} \in \Spl_R(G,\alpha)$ then 
\[{\bf p}_{v_k} - {\bf p}_{v_0} = \left({\bf p}_{v_k} - {\bf p}_{v_{k-1}}\right) + \left({\bf p}_{v_{k-1}} - {\bf p}_{v_{k-2}}\right) + \cdots + \left({\bf p}_{v_1} - {\bf p}_{v_{0}}\right) \in \alpha'(v_0v_k)\]
by construction of the labeling $\alpha'$.  Similarly, we prove surjectivity.  If ${\bf q} \in \Spl_R(G', \alpha')$ then there exist $r_i \in \alpha(v_{i-1}v_i)$ for each $i = 1, \ldots, k$ with 
\[{\bf q}_{v_k}-{\bf q}_{v_0} = r_1 + r_2 + \cdots + r_k.\]
Define a spline ${\bf p}$ by the rule that for each $i=1,\ldots, k-1$ we have 
\[{\bf p}_{v_i} = {\bf q}_{v_0}+r_1+r_2 + \cdots + r_i\]
and for all other $u \in V_{G'}$ we have ${\bf p}_u={\bf q}_u$.  By definition of $r_i$, we have ${\bf p}_{v_i}-{\bf p}_{v_{i-1}} = r_i$ for all vertices on the path $P_k$.  If $uu' \in E_{G}$ is any edge off the path $P_k$, then 
\[{\bf p}_u-{\bf p}_{u'} = {\bf q}_u-{\bf q}_{u'} \in \alpha(uu') \]
since $\alpha'$ and $\alpha$ agree on those edges.  Thus ${\bf p} \in \Spl_R(G,\alpha)$ and $\rho({\bf p}) = {\bf q}$.

The kernel of the map $\rho$ consists of those splines ${\bf p} \in \Spl_R(G,\alpha)$ with ${\bf p}_u=0$ whenever $u \in V_{G'}$, namely $\Spl_R(G, \alpha; V_{G'})$.  Since $V_{G'} = V_G - \{v_1, v_2, \ldots, v_{k-1}\}$ this based subspace of splines is isomorphic to the space of splines on the subgraph of $G$ that is induced by the path $P_k$ based at $\{v_0, v_k\}$.  In other words,
\[\Spl_R(G, \alpha; V_{G'}) \cong \Spl_R(P_k, \alpha_{P_k}; \{v_0, v_k\}).\]
Now consider the $k$-cycle $C_k$ obtained from $P_k$ by identifying the endpoints of the path, in other words the edge-labeled graph with vertex set and edge set written
\[V_{C_k} = \{v_1, v_2, \ldots, v_{k-1}, u=\{v_0, v_k\}\} \textup{ and } E_{C_k} = \{v_1v_2, v_2v_3, \ldots, v_{k-1}u, uv_1\}\]
and edge labels $\alpha$ agreeing with those on $P_k$.  Since all spline conditions agree, we have
\[\Spl_R(P_k, \alpha; \{v_0, v_k\}) = \Spl_R(C_k,\alpha; u).\]
The first isomorphism theorem now completes the proof.
\end{proof}

\subsection{Splines on graphs of genus one and two} \label{section: genus one and two}

In this final section of the work we aim to implicitly find the generators of the $\cGgop$-module of splines in genus one and two. While this section does not directly appeal to any results of prior sections, other than the definition of the map induced by edge contractions, it does illustrate morally \emph{why} our finite generation result holds at least in small genus.

Throughout this section we will largely be considering the \emph{reduced} graphs of genus one or two.  There is a finite list of reduced graphs for each genus, allowing us to classify fairly straightforwardly. Reduced graphs play an important role in topological work like that of Culler and Vogtmann on outer space \cite{CV,Vogt} and Chan, Galatius, and Payne on moduli spaces of tropical curves \cite{CGP}.  

We start by defining what it means to be reduced.

\begin{definition} \label{definition: reduced graph}
A graph with more than one vertex is {\textit{reduced}} if it has neither a vertex of degree $2$ nor a bridge, namely an edge whose removal disconnects the graph.  
\end{definition}

We next prove that there is a small (finite) list of graphs of genus one and two.

\begin{lemma}\label{lemma: classifying reduced graphs g = 1, 2}
If $G$ is a reduced graph of genus $1$ then $G$ consists of a single loop at a vertex, namely $V_G=\{v\}$ and $E_G=\{vv\}$.

If $G$ is a reduced graph of genus $2$ then $G$ consists either of the figure eight, namely $V_G=\{v\}$ and $E_G = \{e_1, e_2\}$ where both $e_i$ start and end at $v$, or $G$ consists of the theta graph, namely $V_G=\{u, v\}$ and $E_G=\{e_1, e_2, e_3\}$ where all three $e_i$ start at $u$ and end at $v$, as shown in Figure~\ref{figure: reduced genus 2}.
\end{lemma}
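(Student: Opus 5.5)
The plan is a degree count via the handshake lemma, combined with the genus formula $|E_G| = |V_G| + g - 1$. This bounds the number of vertices of a reduced graph. A short case analysis then finishes both genera. Throughout, a loop contributes $2$ to the degree of its vertex, and multi-edges are allowed.

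\emph{Step 1: single-vertex graphs.} If $|V_G| = 1$, every edge is a loop and $|E_G| = g$. For $g=1$ this is the single loop, and for $g=2$ it is the figure eight. I read Definition~\ref{definition: reduced graph} as imposing its conditions only when there is more than one vertex, so these roses are the one-vertex reduced graphs. It then remains to show that a reduced graph with $|V_G| \geq 2$ and $g=1$ does not exist, and that for $g=2$ it must be the theta graph.

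\emph{Step 2: the minimum degree is at least $3$.} Suppose $|V_G|\ge 2$. By connectivity no vertex has degree $0$. A vertex $w$ of degree $1$ is incident to a single non-loop edge, and removing that edge isolates $w$. So the edge is a bridge, which is excluded. Degree $2$ is excluded by definition. Hence every vertex has degree at least $3$. The handshake lemma then gives
\[
2(|V_G| + g - 1) = 2|E_G| = \sum_{w} \deg w \geq 3|V_G|,
\]
so $|V_G| \leq 2g-2$. For $g=1$ this forces $|V_G|\le 0$, a contradiction. This proves the genus one claim.

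\emph{Step 3: genus two with two vertices.} For $g=2$ we get $|V_G|\le 2$, so the remaining case is $V_G=\{u,v\}$ with $|E_G|=3$. Since $\sum \deg = 6$ and each degree is at least $3$, both $u$ and $v$ have degree exactly $3$. Suppose some loop exists, say at $u$. Then $u$ has exactly one further edge, and by connectivity it joins $u$ to $v$. That edge is the only edge between $u$ and $v$, so it is a bridge, which is a contradiction. Hence all three edges join $u$ to $v$, and $G$ is the theta graph.

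The only delicate point is the bookkeeping in Steps 2 and 3: counting loops twice in degrees, and noting that the unique $u$--$v$ edge is a bridge. The bound $|V_G|\le 2g-2$ is the key step and is routine.
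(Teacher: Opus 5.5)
Your proof is correct and follows essentially the same route as the paper: the handshake lemma together with $|E_G|=|V_G|+g-1$ and minimum degree $3$ gives $|V_G|\le 2g-2$, and the two-vertex genus-two case is settled by ruling out loops with a bridge argument. Your version also explicitly justifies excluding degree-one vertices (their edge is a bridge), and it shortens the loop case by noting that both vertices have degree exactly $3$, where the paper instead splits into ``both vertices have loops'' and ``exactly one does.''
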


\begin{proof}
First recall the following formula:
\begin{equation}\label{equation: degree sum and edges}
\sum \deg v = 2 |E_G|,
\end{equation}
which is obtained by counting the edges in $G$ incident to each vertex and then noting that each edge is counted twice since both endpoints are vertices.  (This is true even for edges of the form $vv$ for some vertex $v$.)

If the genus is $g$ then by definition
\[|E_G|-|V_G|+1=g,\]
and so $|E_G|=|V_G| + g-1$. With Equation~\eqref{equation: degree sum and edges}, this gives $\sum_{v \in V_G} \deg v = 2|V_G| + 2g-2$.  If the graph is reduced and has more than one vertex then $\deg v \geq 3$ for all $v \in V_G$, so
\[2|V_G|+2g-2= \sum_{v \in V_G} \deg v \geq \sum_{v \in V_G} 3 = 3|V_G|.\]

If $g=1$ this is a contradiction, meaning the only reduced graph of genus one has a single vertex $V_G = \{v\}$ and a single edge $E_G = \{vv\}$.

If $g=2$ this simplifies to $2 \geq |V_G|$.  Now analyze the two cases.  If $|V_G| = 1$ then by the definition of genus we conclude $|E_G|=2$ and each edge must start and end at the same vertex.  If $|V_G|=2$ then $|E_G|=3$.  If both vertices have loops then either they have a bridge between them or the graph is disconnected, both of which contract our hypothesis.  If exactly one vertex has a loop, then then the other vertex has degree two or less, which contradicts the assumption that $G$ is reduced.  This gives the two cases shown in Figure~\ref{figure: reduced genus 2}.
\begin{figure}[h] \label{figure: reduced genus 2}
    \centering
    \includegraphics{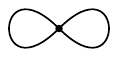}
    \hspace{3cm}
    \includegraphics{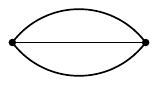}
    \caption{The two reduced graphs of genus 2: The figure eight and theta graph, respectively}
    \label{twogenus}
\end{figure}
\end{proof}

Moreover, the following result analyzes explicitly all possible contractions that preserve the genus of a graph.

\begin{lemma} \label{lemma: contractions that preserve genus}
If $G$ is a graph of genus $g$ then there is a sequence of contractions $\varphi_i$ from $G$ to a reduced graph of genus $g$ so that each $\varphi_i$ has one of the following three types:
\begin{enumerate}
    \item {\bf Contraction of a tree:} $G$ has an induced subgraph $T$ that is a rooted tree and the map $\varphi\colon G \rightarrow G'$ contracts all of $T$ to its root.
    \item {\bf Contraction of a bridge path:} $G$ has two vertices $u$ and $v$ with a path $P$ between them consisting entirely of bridge edges, and $\varphi\colon G \rightarrow G'$ contracts all of $P$ to the single vertex $u \sim v$.
    \item {\bf Contraction of a non-bridge path:} $G$ has two vertices $u$ and $v$ with a path $P$ between them and another path $P'$ between them that is disjoint from $P$ except at $u, v$, and $\varphi\colon G \rightarrow G'$ contracts $P$ to a single edge $uv$.
\end{enumerate}
\end{lemma}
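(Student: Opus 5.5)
Let $G$ be a connected graph of genus $g$. I will argue by induction on $|E_G|$ (equivalently on $|V_G|$, since $|E_G|=|V_G|+g-1$ is fixed). If $G$ is reduced, or if $G$ has a single vertex (a rose, which has no degree-two vertex and no bridge in the relevant sense), the empty sequence works. Otherwise $G$ has more than one vertex and either a bridge or a vertex of degree $2$. In each case I will exhibit one contraction of the three listed types. It will contract at least one edge, never contract a cycle, and preserve genus, since contracting a subtree with $m$ edges removes exactly $m$ edges and $m$ vertices. Induction on the resulting graph then finishes the argument.

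\textbf{Case A: $G$ has a bridge.} Let $e=uv$ be a bridge, and let $H_u$ and $H_v$ be the two components of $G-e$. If one side, say $H_v$, is a tree, then $T=H_v\cup\{e\}$ is an induced subtree containing $u$. Rooting it at $u$ and contracting it is a type-(1) contraction. It has no cycle, because the only edge joining $T$ to the rest of $G$ lies at $u$. If neither side is a tree, I take a maximal path $P$ through $e$ all of whose edges are bridges and contract $P$ to a point. Because $P$ is a path of bridges, it is a subtree, so the contraction is legitimate and is of type (2). In fact every bridge path is a tree, so type (2) is just a special case of type (1); I will still present it separately to match the statement.

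\textbf{Case B: $G$ is bridgeless but has a vertex $w$ of degree $2$.} Starting at $w$, I extend in both directions along degree-two vertices to get a maximal path $P=u\,w_1\cdots w_{k-1}\,v$ whose interior vertices all have degree $2$. Since $G$ is bridgeless, no edge of $P$ is a bridge, so $G-E(P)$ connects $u$ to $v$ and gives a second path $P'$ that is internally disjoint from $P$. The case where $u=v$ and $P$ is a cycle hanging at a single vertex needs separate treatment. If $G$ is then just that cycle, contracting all but one edge yields a loop, which is reduced. Otherwise $u$ has further edges, and I can still contract $P$ down to a single loop at $u$. I will treat this loop version as the degenerate case of type (3). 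In the main case $u\ne v$, contracting all interior edges of $P$ except one replaces $P$ by a single edge $uv$. This is type (3), and it removes $k-1\ge 1$ vertices. The contracted edges form a subpath, so no cycle is contracted.

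Since every step strictly decreases $|V_G|$, the process terminates. It can only stop at a graph with one vertex or a graph with neither bridges nor degree-two vertices, and by Definition~\ref{definition: reduced graph} such a graph is reduced. I expect the main obstacle to be bookkeeping rather than substance. The degenerate configurations need care: loops (which count $2$ toward degree), multi-edges making $u=v$ after contraction, and the single-vertex terminal case, which must be counted as reduced. This is consistent with Lemma~\ref{lemma: classifying reduced graphs g = 1, 2}, which lists the loop and the figure eight. I will handle these by declaring single-vertex graphs terminal and letting type (3) contract to a loop when the path's endpoints coincide.
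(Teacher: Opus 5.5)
Your proof is correct and uses the same moves as the paper's: contract trees and bridge paths (types 1 and 2), then contract each path of degree-two vertices down to a single edge. When that path's endpoints coincide you contract it to a loop, which the paper also allows with ``possibly equal''. The only real difference is organizational: your induction on $|V_G|$ applies one move at a time, so you never need the paper's unproved claims that later contractions create no new leaves or bridges (one small slip: the one-loop rose does have a degree-two vertex, and it counts as reduced only because the definition exempts single-vertex graphs, as you note at the end).
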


\begin{proof}
Suppose $G$ is a graph with genus $g$.  If $u \in V_G$ has degree one then the induced subgraph consisting of the single edge $uv$ and its endpoints can be considered a tree, which we root at $v$.  The contraction of this leaf subgraph to $v$ does not change the genus of $G$ since we removed one edge and one vertex simultaneously, so the new genus is
\[ (|E_G|-1) - (|V_G|-1)+1 = |E_G| - |V_G|+1.\]
Contracting a rooted tree $T$ to its root can be interpreted as a sequence of contractions of these leaf subgraphs, so also do not change the genus.  By performing this first type of contractions, we may assume that $G$ does not have any vertices of degree one.

Now suppose that $G$ has a bridge, say the edge $uv$.  The contraction that sends the entire edge to either endpoint preserves genus since, like before, this contraction removes a single edge and single vertex simultaneously.  If $P$ is a path of length $k$ in $G$ whose edges are all bridges, then we can contract to either endpoint of $P$ via a sequence of single-edge contractions while preserving genus, as claimed. Moreover, these contractions do not create any vertices of degree one.  So we assume $G$ has neither vertices of degree one nor bridges.

Finally, suppose $P$ is a path in $G$ of length $k+1 \geq 2$ between vertices $u_0$ and $u_{k+1}$ (possibly equal) and suppose every interior vertex in $P$ has degree two.  Each vertex of degree two lies on such a path, e.g. the path formed by the vertex and its neighbors.  The contraction of $u_0u_1\cdots u_k$ to $u_0$ deletes exactly $k$ edges and $k$ vertices from $G$ so again preserves genus.  Moreover, this contraction eliminates vertices of degree two while creating neither vertices of degree one nor bridges.  This proves the claim.
\end{proof}

Combining this with the previous subsection gives the following.

\begin{theorem} \label{theorem: splines on graphs genus one and two}
Suppose $G$ is a graph of genus one or two and $G_r$ is a reduced graph of the same genus as $G$.  Choose contractions of the types in Lemma~\ref{lemma: contractions that preserve genus} as follows:
\begin{itemize}
    \item Contract all subtrees of $G$ with the composition $\varphi_{Tr}\colon G \rightarrow G_{Tr}$ of maps of type 1 from Lemma~\ref{lemma: contractions that preserve genus}, where $G_{Tr}$ has the same genus as $G$ but no vertices of degree one.  Let  $\mathcal{T}$ be the union of the subtrees of $G$ contracted by $\varphi_{Tr}$.
    \item Contract all bridges in $G_{Tr}$ with the composition $\varphi_{B}\colon G_{Tr} \rightarrow G_{B}$ of maps of type 2 from Lemma~\ref{lemma: contractions that preserve genus}, where $G_{B}$ has the same genus as $G$ but neither vertices of degree one or bridges.  Let $\mathcal{B}$ be the union of  all bridge paths contracted by $\varphi_{B}$.  Let $P$ denote a connected component of $\mathcal{B}$, namely a maximal bridge path in $G_{Tr}$.
    \item Contract all non-bridge paths with the composition $\varphi_2\colon G_{B} \rightarrow G_r$ of maps of type 3 from Lemma~\ref{lemma: contractions that preserve genus}, where $G_r$ is a reduced graph of the same genus as $G$.  Let $\mathcal{C}$ denote the union of the non-bridge paths contracted by $\varphi_2$.  Write $C \in \mathcal{C}$ to denote the cycle obtained from each maximal path in $\mathcal{C}$ by identifying endpoints.
\end{itemize}  

Denote the composition $\varphi\colon G \rightarrow G_r$.  Let $\alpha$ be an edge-labeling on $G$.  Then $\varphi$ induces a surjection on splines whose kernel is isomorphic to
\[\bigoplus_{T \in \mathcal{T}} \Spl_R(T, \alpha|_T; \varphi_{Tr}(T)) \oplus \bigoplus_{P \in \mathcal{B}} \Spl_R(P, \alpha|_P; \varphi_B(P)) \oplus \bigoplus_{C \in \mathcal{C}} \Spl_R(C, \alpha|_C; u_C \sim v_C)\]
where $u_C \sim v_C$ denotes the endpoints identified to create the cycle $C$.  Moreover, this map preserves flow-up generators except perhaps on the cycles.
\end{theorem}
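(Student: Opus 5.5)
The plan is to treat $\varphi = \varphi_2\circ\varphi_B\circ\varphi_{Tr}$ stage by stage. At each stage the graph-theoretic contraction comes with a restriction map on splines, which I will show is surjective, and I will identify its kernel using the results of Section~\ref{section: surjective quotient map and sections}. The "surjection on splines induced by $\varphi$" is then the composite of these restriction maps $\Spl_R(G,\alpha)\to\Spl_R(G_r,\alpha_r)$. Here $\alpha_r$ is the induced labeling, which on a contracted non-bridge path is the sum of ideals from Lemma~\ref{lemma: contracting path}. The composites of the vertex expansions $\varphi_{Tr}^*$ and $\varphi_B^*$ provide sections. Since every map is surjective, a composite of surjections has a filtration by kernels. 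The first two stages split, so the total kernel is an iterated extension, and it becomes a direct sum as soon as each stage splits. I will check splitting stage by stage.

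\emph{Stage 1 (trees).} $G$ is obtained from $G_{Tr}$ by repeated one-point unions $G_{Tr}\vee_{v}T$ with trees, so Corollary~\ref{corollary: wedging trees} applies directly. It gives $\Spl_R(G,\alpha)\cong\Spl_R(G_{Tr},\alpha|)\oplus\bigoplus_{T}\Spl_R(T,\alpha|_T;\varphi_{Tr}(T))$, where the based vertex is the root onto which $T$ collapses. The summand $\Spl_R(G_{Tr})$ is embedded by $\varphi_{Tr}^*$ and is recovered by restriction via Lemma~\ref{lemma: one-point union splines}. The corollary also shows that flow-up generators are carried to flow-up generators, provided the vertices of each tree are ordered after its root.

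\emph{Stage 2 (bridges).} Each maximal bridge path $P$ in $G_{Tr}$ joins two bridgeless pieces. The graph therefore has the form $H\vee P\vee H'$, and I apply Corollary~\ref{corollary: bridges} to each $P$ in turn. Contracting $P$ to one vertex yields the graph $H\vee_{u\sim v}H'$, whose splines are exactly $\varphi^*(\Spl_R(H))\oplus\varphi'^*(\Spl_R(H';u'))$ by Lemma~\ref{lemma: one-point union splines}. Comparing this with Corollary~\ref{corollary: bridges} isolates the kernel summand $\Spl_R(P,\alpha|_P;\varphi_B(P))$, again with sections and with flow-up preserved. I then induct on the number of maximal bridge paths.

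\emph{Stage 3 (non-bridge paths).} Each type-3 contraction replaces a path of degree-two vertices by a single edge. This is precisely the setting of Lemma~\ref{lemma: contracting path}, which gives a surjective restriction map with kernel $\Spl_R(C,\alpha|_C;u_C\sim v_C)$. I apply it once per maximal path in $\mathcal{C}$. The paths are internally disjoint, so their kernels live on disjoint vertex sets and add up to a direct sum.

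The main obstacle is Stage 3. The restriction here has no canonical section: the lift constructed in Lemma~\ref{lemma: contracting path} depends on choosing a decomposition of ${\bf q}_{v_k}-{\bf q}_{v_0}$ into elements $r_i$. So I will not claim a natural splitting. Instead I will state the kernel as computed, and the total kernel as the sum of stage kernels, justified by the filtration. For the tree and bridge summands I will use the splittings given by $\varphi^*$. I expect this to be the point needing most care, since the stage-3 kernel is only an extension piece unless $\Spl_R(G_r)$ is projective. This is also exactly why the flow-up statement carries the exception ``except perhaps on the cycles.''
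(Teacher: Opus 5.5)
Your three stages are the paper's own proof: Corollary~\ref{corollary: wedging trees} for the trees, Lemma~\ref{lemma: one-point union splines} with Corollary~\ref{corollary: bridges} for the bridge paths, and Lemma~\ref{lemma: contracting path} for the non-bridge paths. Your within-stage-3 argument is also fine: the kernels decouple across internally disjoint paths. The gap is in how you assemble the stages. You conclude only that the total kernel has a filtration with the three stage kernels as subquotients. You explicitly decline to claim a direct sum, because the stage-3 restriction has no section. The theorem asserts an isomorphism with the direct sum, so as written your argument stops short of the statement. Your reason for stopping is a misdiagnosis: the sentence ``it becomes a direct sum as soon as each stage splits'' asks for more than is needed, since only the first two stages have to split.

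Let $\rho_1,\rho_2,\rho_3$ be the three surjections, with sections $\sigma_1=\varphi_{Tr}^*$ and $\sigma_2=\varphi_B^*$, and set $K=\ker(\rho_3\rho_2\rho_1)$.
\begin{enumerate}
\item The map $\rho_2\rho_1$ sends $K$ onto $\ker\rho_3$ with kernel $\ker(\rho_2\rho_1)$.
\item For $c\in\ker\rho_3$, the element $\sigma_1\sigma_2(c)$ lies in $K$, because $\rho_3\rho_2\rho_1\sigma_1\sigma_2(c)=\rho_3(c)=0$. It maps back to $c$, because $\rho_2\rho_1\sigma_1\sigma_2(c)=c$.
\item Thus $\sigma_1\sigma_2|_{\ker\rho_3}$ splits $0\to\ker(\rho_2\rho_1)\to K\to\ker\rho_3\to 0$.
\item In the same way, $\sigma_1|_{\ker\rho_2}$ splits $0\to\ker\rho_1\to\ker(\rho_2\rho_1)\to\ker\rho_2\to 0$.
\end{enumerate}
Hence $K\cong\ker\rho_1\oplus\ker\rho_2\oplus\ker\rho_3$. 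The stage-3 kernel is the top quotient of your filtration, and it is split off by the sections of the \emph{earlier} stages, not by a section of $\rho_3$.

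A section of $\rho_3$ (for instance from projectivity of $\Spl_R(G_r,\alpha_r)$) would only be needed to decompose $\Spl_R(G_B,\alpha|_{G_B})$ itself as $\ker\rho_3\oplus\Spl_R(G_r,\alpha_r)$. The theorem does not claim that. The absence of this section is what forces the ``except perhaps on the cycles'' caveat for lifting flow-up generators, but it has no effect on the structure of the kernel. The paper's ``combining these gives the result'' tacitly uses exactly this splitting, so adding the argument above completes your proof.
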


\begin{proof}
Corollary~\ref{corollary: wedging trees} asserts that the map $\varphi_{Tr}$ induces a surjection from $\Spl_R(G,\alpha)$ onto $\Spl_R(G_{Tr},\alpha|_{G_{Tr}})$ with kernel $\bigoplus_{T \in \mathcal{T}} \Spl_R(T,\alpha|_T;\varphi_{Tr}(T))$.  

If $P$ is a bridge path in the graph $G_{Tr}$ then contracting $P$ produces the one-point union of the induced subgraphs of $G_{Tr}$ at the endpoints of $P$.  Using Lemma~\ref{lemma: one-point union splines} and Corollary~\ref{corollary: bridges}, we thus conclude that $\varphi_B$ induces a surjection from $\Spl_R(G_{Tr},\alpha|_{G_{Tr}})$ onto $\Spl_R(G_B,\alpha|_{G_B})$ with fiber $\bigoplus_{P \in \mathcal{B}} \Spl_R(P,\alpha|_P; \varphi_B(P))$ as claimed.

Finally, from Lemma~\ref{lemma: contracting path} we see that $\varphi_2$ induces a surjection from $\Spl_R(G_B,\alpha|_{G_B})$ onto $\Spl_R(G_r,\alpha|_{G_r})$ with the specified kernel.  Combining these gives the result, because the first two contraction maps preserve flow-up bases.
\end{proof}

Thus our methods provide an explicit computational tool for spline calculations.

\bibliography{./symplectic}
\bibliographystyle{amsalpha}

\end{document}